\documentclass{siamart0516}
\usepackage{geometry}                
\geometry{letterpaper}                   
\geometry{margin=1in}
\usepackage{graphicx}
\usepackage{amssymb, amsmath}
\usepackage{epstopdf}
\usepackage{tikz}
\usepackage{pgfplots}
\pgfplotsset{compat=newest}
\DeclareGraphicsRule{.tif}{png}{.png}{`convert #1 `dirname #1`/`basename #1 .tif`.png}

\newcommand{\bbR}{\ensuremath{\mathbb{R}}}

\newcommand{\bfm}{\ensuremath{\mathbf{m}}}

\newcommand{\bfv}{\ensuremath{\mathbf{v}}}

\newcommand{\bfx}{\ensuremath{\mathbf{x}}}
\newcommand{\bfz}{\ensuremath{\mathbf{z}}}

\newcommand{\mbar}{\ensuremath{\bar{m}}}

\newcommand{\oneTo}[1]{\ensuremath{ \{1, \ldots, #1 \} }}

\def \etal {\emph{et al.}}

\newcommand{\beq}{\begin{equation}}
\newcommand{\eeq}{\end{equation}}
\newcommand{\bal}{\begin{align}}
\newcommand{\eal}{\end{align}}


\newcommand{\mb}{\ensuremath{\bar{m}}}
\newcommand{\dm}{\ensuremath{\Delta m}}
\newcommand{\dv}{\ensuremath{\Delta v}}
\newcommand{\vb}{\ensuremath{\bar{v}}}

\DeclareMathOperator{\lchild}{lc}
\DeclareMathOperator{\rchild}{rc}
\DeclareMathOperator{\ldesc}{ld}
\DeclareMathOperator{\rdesc}{rd}
\DeclareMathOperator{\desc}{d}
\DeclareMathOperator{\parent}{p}

\title{Dynamical, value-based decision making among $N$ options:\\ a constructive approach to unfolding\\ the symmetric pitchfork bifurcation} 
\author{Paul Reverdy\thanks{P. Reverdy is with the Department of Aerospace and Mechanical Engineering, University of Arizona, Tucson, AZ 85721. Email: \texttt{preverdy@arizona.edu}}}

\begin{document}
\maketitle

\begin{abstract}
Decision making is a fundamental capability of autonomous systems. As decision making is a process which happens over time, it can be well modeled by dynamical systems. Often, decisions are made on the basis of perceived values of the underlying options and the desired outcome is to select the option with the highest value. This can be encoded as a bifurcation which produces a stable equilibrium corresponding to the high-value option. When some options have identical values, it is natural to design the decision-making model to be indifferent among the equally-valued options, leading to symmetries in the underlying dynamical system. For example, when all $N$ options have identical values, the dynamical system should have $S_N$ symmetry. Unfortunately, constructing a dynamical system that unfolds the $S_N$-symmetric pitchfork bifurcation is non-trivial. In this paper, we develop a method to construct an unfolding of the pitchfork bifurcation with a symmetry group that is a significant subgroup of $S_N$. The construction begins by parsing the decision among $N$ options into a hierarchical set of $N-1$ binary decisions encoded in a binary tree. By associating the unfolding of a standard $S_2$-symmetric pitchfork bifurcation with each of these binary decisions, we develop an unfolding of the pitchfork bifurcation with symmetries corresponding to isomorphisms of the underlying binary tree.
\end{abstract}

\begin{keywords}
  symmetries of dynamical systems, bifurcations, binary trees
\end{keywords}

\begin{AMS}
  37C80, 37G10, 37E25
\end{AMS}

A fundamental characteristic of systems that exhibit autonomy is the ability to decide among a set of possible actions. Such systems arise in the natural world, composed of individuals or groups of humans or animals, and also in the artificial world, composed of robots or other algorithmic agents. Studying decision making in both types of systems is of scientific and engineering interest. In natural systems, one tends to focus on developing models that explain observed behavior, while in artificial systems, one tends to focus on developing models that perform some desired behavior. In both cases, it is natural to develop models which make decisions using a dynamic mechanism that reacts to some perceived notion of action value. This paper focuses on the case of a single agent and an arbitrary number of actions and constructs such a mechanism based on the pitchfork bifurcation with symmetries corresponding to interchange of action labels.

As done in numerous recent papers \cite{TDS-etal:12, JARM-etal:09, DP-etal:13, AR-etal:17,RG-etal:18, AF-MG-NEL:19}, it is natural to model the decision making process using a dynamical system. Decision making is itself a dynamic process, affected by the set of actions available to the individual and the context, including the individual's internal state and external inputs. 
For example, consider an animal running in a rough environment. The actions available to the animal correspond to locations it can place its feet on each stride, the internal state to the animal's level of fatigue and fear, and the external inputs to stimuli revealing the quality of available foot placements or the presence of a predator. Decisions must be made quickly, since one is required for each step, and the decision-making process must be flexible, so that decisions can be revised smoothly if a previously attractive location turns out to be undesirable or the appearance of a predator requires a startle response. As argued in \cite{AF-MG-NEL:19}, these requirements suggest viewing the decision-making process as a dynamical system operating in continuous time and on a continuous state space.

The example of the running animal is an example of the ecological theory of \emph{affordances}. An affordance is an opportunity for action which the environment affords the agent \cite{JJG:79}, and the theory of affordances has gained significant traction as a model of ecological decision making \cite{KF-etal:13, NFL-GP:15}. In particular, the so-called \emph{affordance competition hypothesis} \cite{PC-JFK:10, PC:07} suggests that animals perform physical behaviors by continuously identifying affordances and selecting among them by a process which is biased by the desirability of their predicted outcomes. More recently, affordance theory has been investigated in robotics as an approach for developing novel control laws \cite{APD-LPK-WHW:98, PZ-etal:17}. In our own work we have begun to use dynamical systems which we call \emph{motivation dynamics} to develop both theoretical \cite{PBR-DEK:18} and practical \cite{PBR-VV-DEK:20} robot control systems. Therefore, a dynamical system for making decisions on the basis of perceived action values could be valuable for applications in both ecology and robotics.

This paper constructs a continuous dynamical system that allows a single agent to make decisions on the basis of perceived action values. Motivated by recent work in this area \cite{TDS-etal:12,DP-etal:13,AF-MG-NEL:19,RG-etal:18}, we base our dynamical system on the pitchfork bifurcation with certain symmetries. In the nominal case where all $N$ actions have the same value, we require that the dynamical system be equivariant under interchange of actions and impose symmetries which correspond to permutations of the actions. This allows us to use the tools of equivariant bifurcation theory \cite{MG-DGS:85,MG-IS:03}. We introduce asymmetric action values as unfolding parameters of this pitchfork bifurcation, and construct the system so that decisions correspond to bifurcations.

The key difficulty in our approach lies in the fact that developing an unfolding for the pitchfork bifurcation  with $S_N$-symmetry is non-trivial. In the case of a binary decision between $N=2$ options, i.e., $S_2$ symmetry, the unfolding properties are well understood \cite{MG-DGS:85}, but generalizing to the case $N>2$ is complex. For $N=2$, the model of value-based decision making introduced by \cite{TDS-etal:12} and analyzed by \cite{DP-etal:13} can be shown to embed an unfolding of the pitchfork bifurcation \cite{PBR:19d}, but this structure does not naturally generalize even to the case $N=3$ \cite{AR-etal:17}. Reina \etal~\cite{AR-etal:17} show how to modify the Seeley \etal~model to recover desirable bifurcation properties in the case of an arbitrary number of options $N$ but with only one best option, though they do not provide analysis for their general case. In the recent paper by Franci \etal~\cite{AF-MG-NEL:19} the authors use equivariant bifurcation theory to analyze a similar system in the general case of $N>2$ options (and indeed, for a generic number $M>1$ of interacting agents). However, Franci \etal~present explicit equations only for the case of $N=3$ options and $M=3$ agents, and they consider only the symmetric case of equally-valued options.

In contrast to these preceding works, the present paper presents an explicit construction of an unfolding of a pitchfork bifurcation with a symmetry group that is a significant subgroup of $S_N$ for $N\geq2$. The core idea underlying our construction is that one can parse a decision among $N \geq 2$ options into a series of binary decisions; these decisions can be represented by a binary tree. This representation allows us to take advantage of the well-understood representation of binary decisions in terms of the unfolding of a standard pitchfork singularity. We then construct a dynamical system with the desired bifurcation for $N \geq 2$ by assembling a series of standard pitchfork systems in a way that reflects the parsing encoded by the tree structure. In the symmetric case where the options are equally-valued, the dynamical system is equivariant under transformations (i.e., relabeling of options) that correspond to isomorphisms of the binary tree. Breaking the symmetry of option values naturally results in an unfolding of the pitchfork bifurcation.

The contributions of this paper are three-fold. First, we develop a dynamical systems model of decision making where the structure of the decision, and of the dynamical system itself, is encoded in a tree graph structure. Such graph structures have often been used in studying multi-agent decision-making problems, \cite{ROS-RMM:04, RG-etal:18, AF-MG-NEL:19}, but to our knowledge this is the first time a graph structure has been used to organize the dynamical decision-making procedure of a single agent. Secondly, the dynamical system we develop has a novel feedforward structure due to its recursive definition, where the vector field is constructed by recursively parsing down the tree structure. The structure is such that dynamics flow from the root node towards the leaves, which represent options. Thirdly, in building our model we develop a constructive approach to unfolding the equivariant pitchfork bifurcation. Our model has symmetries which correspond to isomorphisms of the underlying binary tree structure, which results in our pitchfork having a symmetry group that is a significant subgroup of $S_N$. This significantly extends existing results, e.g., in \cite{AR-etal:17} and \cite{AF-MG-NEL:19}.

\section{Dynamical systems as models of $N$-ary decision making}

In this section we summarize our requirements for dynamical systems models of decision making. We suppose we have $N \geq 2$ actions. In the following, we refer to actions by the more generic term \emph{option} to reflect that decision making need not be directly tied to action. The $i^{th}$ option has value $v_i > 0$. Inspired by previous work in the decision making literature \cite{TDS-etal:12,DP-etal:13}, we encode the decision state of the system in a variable $m \in \Delta^N$, where
\[ \Delta^N = \left\{ x \in \bbR^{N+1} \Big| x_i \geq 0, \sum_i x_i = 1 \right\} \]
denotes the $N$-simplex. Let $e_i \in \bbR^{N+1}, i \in \oneTo{N+1},$ denote the $i^{th}$ corner of $\Delta^N$, i.e., the vector with the $i^{th}$ element equal to one and all other elements equal to zero. The $i^{th}$ element of $m$, denoted $m_i$, represents the degree to which the system has committed to option $i$. Note that $m_i \in [0,1]$. The $N+1$ element of $m$ represents the degree to which the system is uncommitted to any option, and the normalization condition ensures that the total commitment is finite.

We encode the decision-making process in a continuous vector field $f$ operating on the state space $\Delta^N$. A decision for option $i$ is represented by the state $m$ approaching the corresponding corner of the $N$-simplex, i.e., the state $e_i$. See Figure \ref{fig:3Simplex} for the case $N=2$. We want the system to commit to an option $i$ when its corresponding value $v_i$ is sufficiently high. Thus, we construct the vector field $f$ to have an attracting equilibrium near $e_i$ when $v_i$ is high. When several different options have high values but there is no clear maximum, it is desirable to have several equilibria that encode partial commitment to the attractive options.

\begin{figure}[ht]
\centering
\begin{tikzpicture}
\begin{axis}[width=0.75*\textwidth,
                axis lines=middle,
                axis equal,
                xlabel=$x_1$,
                ylabel=$x_2$,
                zlabel=$x_3$,
                xmin=0,
                xmax=1.25,
                ymin=0,
                ymax=1.25,
                zmin=0,
                zmax=1.25,
                xtick={0,1},
                ytick={0,1},
                ztick={0,1},
                view={135}{30}]
\addplot3[patch, color=blue!25, fill opacity=0.25, faceted color=black, line width=0.5pt] coordinates{(1,0,0) (0,1,0) (0,0,1)};
\end{axis}
\end{tikzpicture}
\caption{Plot of the 2-simplex $\Delta^2$. The simplex is the shaded section of the plane $x_1 + x_2 + x_3 = 1$.}
\label{fig:3Simplex}
\end{figure}
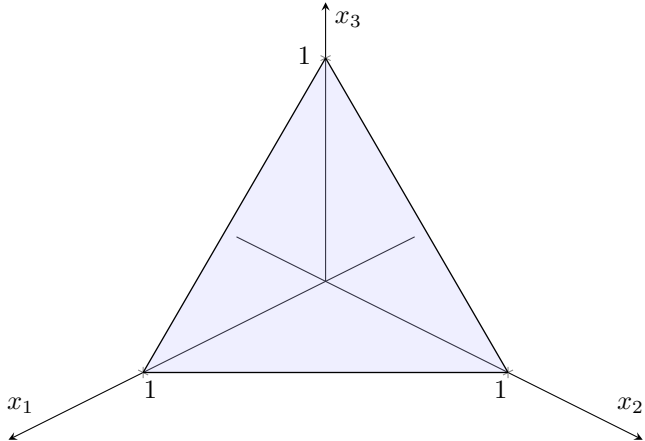

To structure the equilibria, we require symmetry in the vector field $f$ that reflects the symmetry in the options. We model options to be distinguished only by their associated values. Thus symmetries in the options are permutations of the set $\{v_i, i \in \oneTo{N} \}$ that leave the set invariant. In the fully-symmetric case where $v_i = v > 0 \forall i \in \oneTo{N}$, all options are equally valued and therefore identical, and the symmetry group is the symmetric group $S_N$. When the option values are not equal, we want the symmetry in $f$ to be broken. Regardless of any symmetries that may be present, we want the system to remain uncommitted when the values $v_i$ are low and to commit when the values are above a threshold. In the symmetric case, we want the system to remain uncommitted when the value $v$ is low, and to commit symmetrically to all options when $v$ increases beyond a threshold. Conversely, in the asymmetric case, we want the system to commit asymmetrically to the high-value options.

\subsection{Equivariant bifurcation theory}
The desire for a switching commitment process strongly suggests the use of a bifurcation in the dynamical system, and the symmetry requirement encourages the use of equivariant bifurcation theory \cite{MG-DGS:85,MG-IS-DGS:88}. In the case of $N=2$ options, our requirements are satisfied by selecting the vector field $f$ to embed a pitchfork bifurcation as shown in Figure \ref{fig:pitchfork}. The mechanism can be viewed as follows. Let $x = m_1 - m_2$ represent the degree of commitment for option 1 over option 2. Then, in the symmetric case, selecting $f$ to be a pitchfork bifurcation, i.e.,
\beq \label{eq:pitchfork}
\dot x = f(x, \mu) = x(\mu - x^2), \ \mu \in \bbR
\eeq
will yield the desired behavior. Note that $f$ is an odd function of $x$, so the system \eqref{eq:pitchfork} obeys the $S_2$ symmetry corresponding to switching the option labels. When $\mu$ (which represents the value $v$) is less than zero, the unique equilibrium of \eqref{eq:pitchfork} is $x=0$, corresponding to equal commitment to both options, and this equilibrium is stable. As $\mu$ increases through zero, the equilibrium at $x=0$ becomes unstable and two new stable equilibria appear at $\pm \sqrt{\mu}$. These facts are summarized in the bifurcation diagram shown in Figure \ref{fig:pitchfork}\textbf{(a)}. Equivariant bifurcation theory predicts that these two branches must appear symmetrically in the post-bifurcation regime; each branch represents a commitment to one option or the other. Due to symmetry, commitment to either option is possible; the option to which the system commits is determined by initial conditions.

The case of asymmetric option values is naturally modeled by breaking the symmetry of the vector field $f$. The study of such symmetry breaking is a core part of equivariant bifurcation theory, and the fundamental concept is the so-called \emph{unfolding} of the bifurcation. One begins with the concept of a \emph{perturbation} of a bifurcation. Specifically, a perturbation $F$ of a bifurcation $f$ is a function $F(x, \mu, \alpha)$ such that $F(x, \mu, 0) = f(x, \mu)$. A \emph{universal unfolding} is a $k$-parameter (i.e., $\alpha \in \bbR^k$) perturbation $F$ such that any small perturbation of $f$ can be expressed in terms of the $k$ parameters that define $F$. The two-parameter family
\beq \label{eq:pitchfork-unfolding} 
F(x, \mu, \alpha) = x(\mu - x^2) + \alpha_1 + \alpha_2 x^2, \ \alpha_1, \alpha_2 \in \bbR
\eeq
is a universal unfolding of the pitchfork bifurcation \eqref{eq:pitchfork} \cite{MG-DGS:85}. As the parameters $\alpha_1, \alpha_2$ are varied, the bifurcation diagram for $\dot x = F(x, \mu, \alpha)$ varies as well. In particular, for appropriate parameter values, the system has a single equilibrium in the post-bifurcation regime, and this equilibrium is both nonzero and stable (see Figure \ref{fig:pitchfork}\textbf{(b)}). Thus, by choosing appropriate values for the unfolding parameters, one can encode a globally-attracting preference for one option or the other. To complete the model for $N=2$ options, it remains to relate the option values $v_i$ to the bifurcation parameter $\mu$ and unfolding parameters $\alpha_i$ in \eqref{eq:pitchfork-unfolding}.

\begin{figure}[ht]
\centering
\begin{tabular}{cc}
\includegraphics[width=0.47\textwidth]{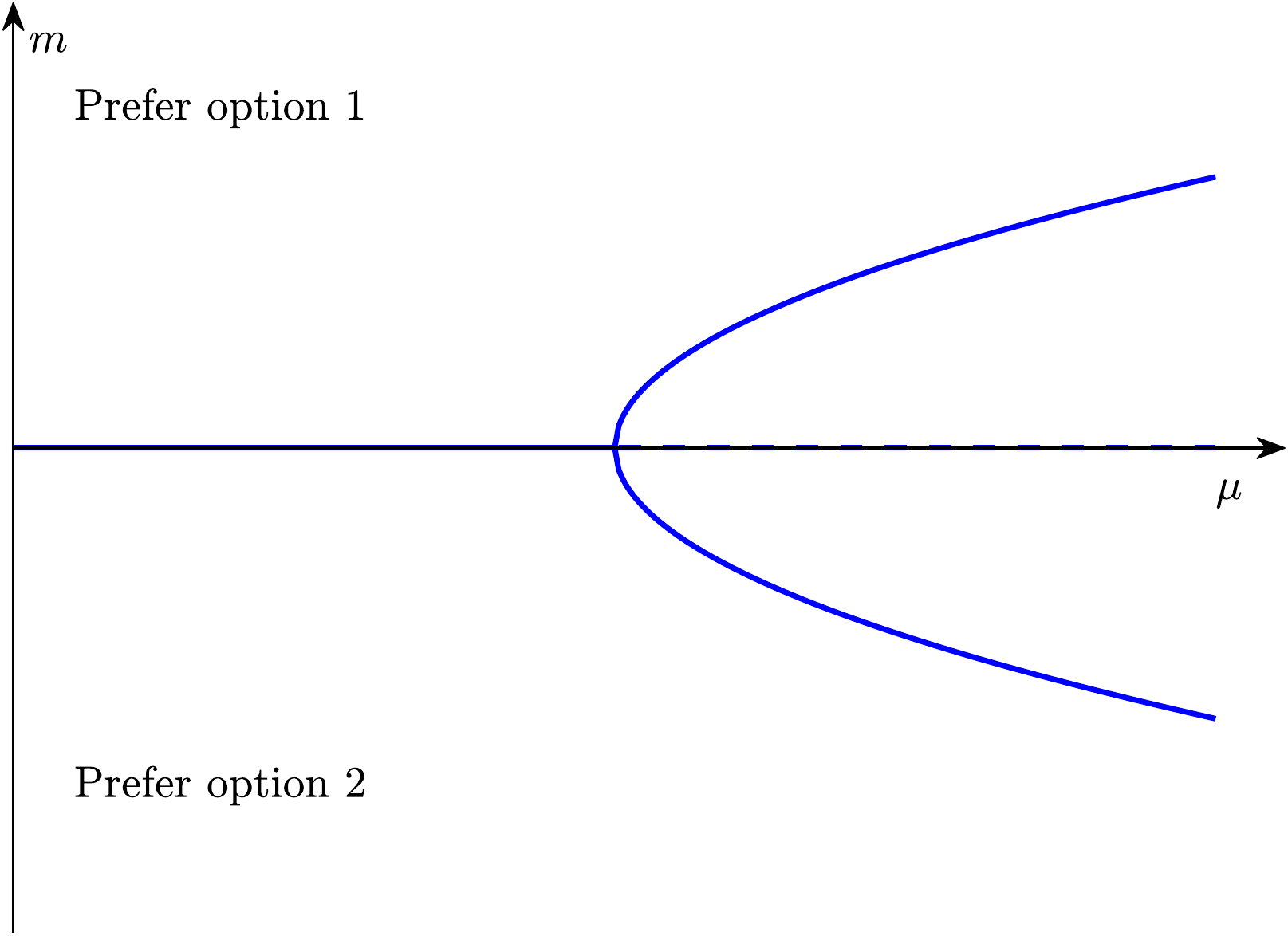} & \includegraphics[width=0.47\textwidth]{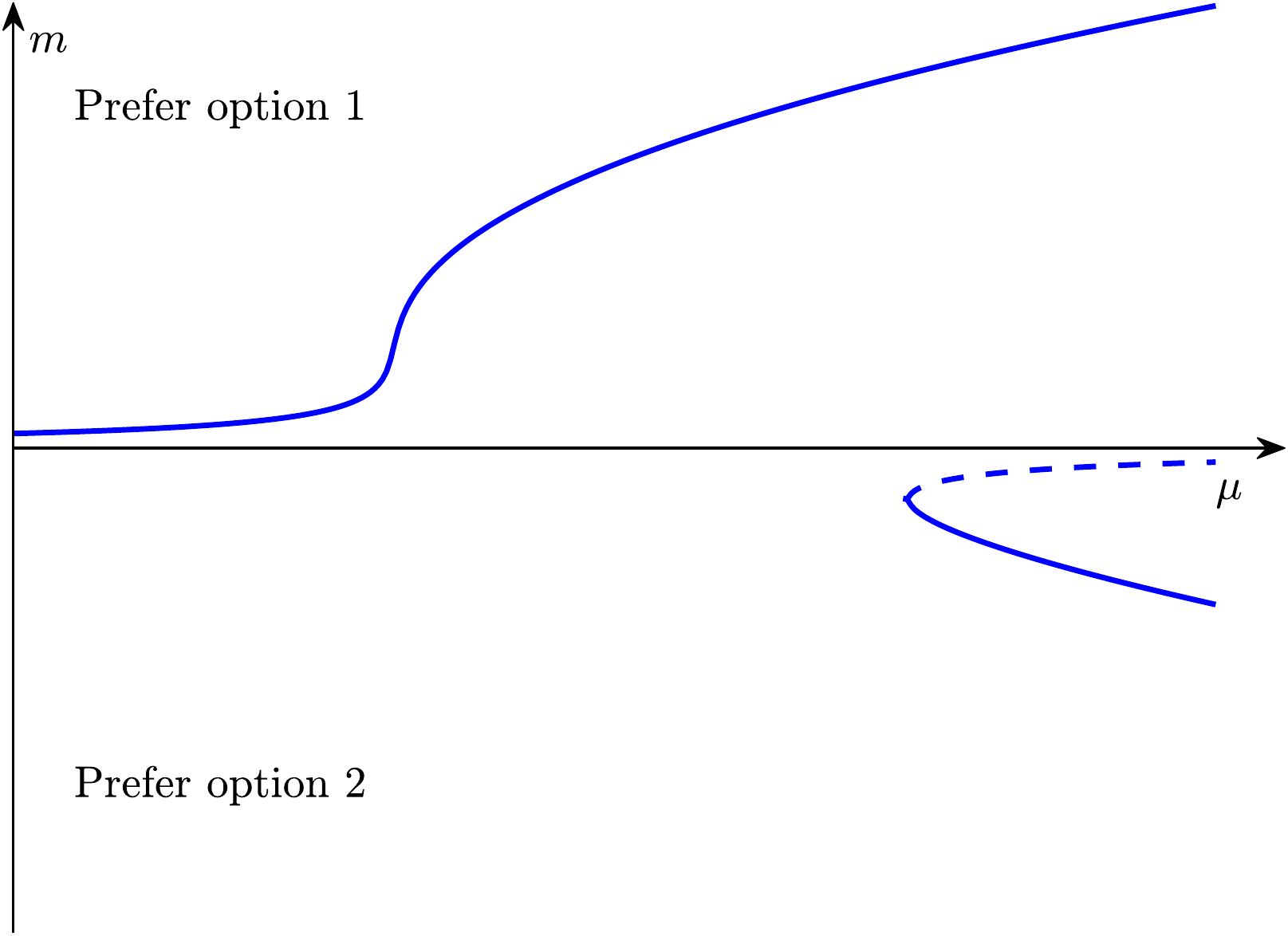} \\
\textbf{(a)} & \textbf{(b)}
\end{tabular}
\caption{Bifurcation diagrams for \textbf{(a)} the symmetric pitchfork \eqref{eq:pitchfork} and \textbf{(b)} the unfolded pitchfork \eqref{eq:pitchfork-unfolding}. Stable equilibria are represented by solid lines and unstable equilibria by dashed lines. Equilibria above the $\mu$ axis represent a preference for option 1, while those below represent a preference for option 2. In panel \textbf{(a)} the system has a single stable equilibrium representing no preference when $\mu$ is small and two symmetric stable equilibria representing preferences for option 1 and option 2 respectively, when $\mu$ is large. Note particularly in panel \textbf{(b)} that, for intermediate values of $\mu$, the system has a single stable equilibrium representing a preference for option 1.}
\label{fig:pitchfork}
\end{figure}

\subsection{Seeley \etal~model for $N=2$}
In their paper \cite{TDS-etal:12}, Seeley \etal~study the value-sensitive decision-making problem for $N=2$ options and develop a dynamical systems model. Their model can be shown to embed an unfolded pitchfork \cite{PBR:19d}, thus completing the model whose mechanism we laid out in the preceding section. Concretely, Seeley \etal~let the state of their model be $m = (m_1, m_2, m_U) \in \Delta^2$ and set $\dot m = f(m, v)$, with
\begin{align} \label{eq:miDynamics} 
\dot{m}_i = v_i m_U - m_i \left( \frac{1}{v_i} - v_i m_U + \sigma (1 - m_i - m_U) \right)
\end{align}
for each $i = 1,2$, and the dynamics of $m_U$ are determined by the normalization constraint. As above, $v_i$ denotes the value of option $i$, and $\sigma > 0$ is a constant parameter.

In the symmetric case of $v_1=v_2 = v$ the dynamics \eqref{eq:miDynamics} obey an $S_2$ symmetry. Seeley \etal~\cite{TDS-etal:12} showed that, in the symmetric case, the dynamics \eqref{eq:miDynamics} exhibits a pitchfork bifurcation as $v$ and $\sigma$ increase above a threshold. In the pre-bifurcation regime, the system has a single stable equilibrium with the symmetry $m_1 = m_2$. Because of the symmetry, the system does not commit to either option, and the equilibrium is said to be a \emph{deadlock}. In the post-bifurcation regime, the deadlock equilibrium is unstable and two additional equilibria emerge, each representing a decision to commit to one of the two options. In previous work \cite{TDS-etal:12}, the parameter values at the bifurcation point were found to satisfy
\beq \label{eq:sigmaCrit}
\sigma = \frac{4v^3}{(v^2-1)^2}.
\eeq
Note that either $\sigma$ or $v$ can be interpreted as the bifurcation parameter, with \eqref{eq:sigmaCrit} defining the bifurcation value. In other words, fixing $\sigma$, the bifurcation occurs as $v$ increases through a threshold, while fixing $v$, the bifurcation occurs as $\sigma$ increases.

In the asymmetric case of $v_1 \neq v_2$, the $S_2$ symmetry of the dynamics is broken and the pitchfork bifurcation unfolds as studied by Pais \etal~\cite{DP-etal:13}. For a fixed value of $\sigma$, the number of equilibria of \eqref{eq:miDynamics} depends on the parameters $v_1$ and $v_2$. Certain parameter values result in a single stable equilibrium whose location is biased towards the high-value option, while others result in two stable equilibria representing each option and a saddle point in between. The complete phase diagram of the system is complex, but the two primary findings of \cite{DP-etal:13} can be summarized as follows. First, the dynamics \eqref{eq:miDynamics} remain deadlocked (i.e., have a single attractor with $m_1 \approx m_2$) when the average option value $\vb = (v_1 + v_2)/2$ is small. Second, the dynamics decide for the high value option (i.e., for $v_1 > v_2$, have a single attractor with $m_1 \gg m_2$) when the difference in option values $\dv = v_1-v_2$ is sufficiently large relative to $\vb = (v_1 + v_2)/2$. In symbols, we have that the system makes a decision when 
\[ \frac{| \dv |}{\vb} > \kappa (\sigma), \]
where $\kappa(\sigma)$ is a coefficient that depends on the parameter $\sigma$. Pais \etal~\cite{DP-etal:13} note that this behavior is analogous to Weber's law of just-noticeable differences from psychology, which states that the minimum difference in stimulus intensity required to discriminate between two different stimuli varies linearly with their mean intensity.

The implication of these two findings is that the decision-making dynamics \eqref{eq:miDynamics} has several desirable properties. First, when both options are poor (corresponding to a low value of $\vb$), the system remains deadlocked and avoids making a decision, e.g., to wait for more information. When at least one option is sufficiently satisfactory (corresponding to a high value of $\vb$), the system will quickly commit to an option, and preferentially select the one with a higher value. These are properties that we seek to generalize to the case of $N>2$ options.

\subsection{Reduction of the Seeley \etal~model}
As shown in the preceding sections, the Seeley \etal~model \eqref{eq:miDynamics} has desirable characteristics that we seek to generalize. However, the functional form of \eqref{eq:miDynamics} obscures the unfolding of the pitchfork bifurcation which serves as the fundamental decision-making mechanism. In recent work \cite{PBR:19d}, we studied the dynamics \eqref{eq:miDynamics} using model reduction techniques to elucidate the unfolding.

The model reductions studied in \cite{PBR:19d} use singular perturbation theory. Specifically, the reduction approach maps $v_i \mapsto K v_i$ for a constant gain $K > 0$ and takes the singular limit $K \to +\infty$. This approach is similar to an analysis performed in \cite{DP-etal:13}, where the authors studied the limit $\vb \to +\infty$; however, the approach using the gain $K$ preserves the relative difference in values $\dv/\vb$. This ratio is key in defining the unfolding of the pitchfork bifurcation embedded in \eqref{eq:miDynamics}.

The bifurcation is more readily analyzed by expressing $m \in \Delta^2$ in terms of mean-difference coordinates defined by
\[ \dm = m_1 - m_2, \mb = \frac{m_1 + m_2}{2} \]
which are analogous to the definitions of $\dv$ and $\vb$ made above. Note that the definitions of these new coordinates and the definitions of $\Delta^2$ and $(v_1, v_2) \in \bbR_+^2$ imply that $\mb, \vb > 0$ and that $-2\mb \leq \dm \leq 2 \mb$ and $-2 \vb < \dv < 2 \vb$.

In the mean-difference coordinates, the dynamics \eqref{eq:miDynamics} of $m = ({\dm}, {\mb})$ take the form
\begin{align} \label{eq:DeltamDot}
\dot{\dm} &= f_{\dm}(\dm, \mb; \vb; \dv)\\ &= - \left( \frac{2 \mb + \dm}{K(2 \vb + \dv)} -\frac{2 \mb - \dm}{K(2 \vb - \dv)} \right) \nonumber + K \vb \dm (1-2 \mb) + K \dv (1-2 \mb) (1+ \mb),
\end{align}

\begin{align} \label{eq:mBarDot}
\dot{\mb} =& f_{\mb}(\dm, \mb; \vb, \sigma; \dv)\\ =& \frac{1}{2} \biggl( -\frac{2 \mb + \dm}{K(2 \vb + \dv)} - \frac{2 \mb - \dm}{K(2 \vb - \dv)} + \frac{K(2 \vb + \dv)}{2}(1-2\mb) (1 + \frac{2 \mb + \dm}{2}) \nonumber \\ 
 & \ \ \ \ \ + \frac{K(2 \vb - \dv)}{2}(1-2\mb) (1 + \frac{2 \mb - \dm}{2}) - \frac{\sigma}{2} (2 \mb + \dm)(2 \mb - \dm) \biggr). \nonumber
\end{align}

A straightforward application of singular perturbation theory with small parameter $\epsilon = 1/K$ and coordinates $x = \dm$, and $y = (1-2 \mb)/\epsilon$ yields the following result.
\begin{theorem}{\cite[Theorem 1]{PBR:19d}} \label{thm:slowDynamics}
In the singular limit $\epsilon \to 0$, the motivation dynamics \eqref{eq:miDynamics} reduce to 
\beq \label{eq:xDotSlow}
\dot x = \frac{\sigma}{2\vb}(1-x^2) \frac{2x + 3 \alpha}{6 + \alpha x},
\eeq
where $\alpha = \dv/\vb$.
\end{theorem}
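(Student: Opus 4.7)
The plan is a standard application of Tikhonov--Fenichel singular perturbation theory with small parameter $\epsilon = 1/K$. I would begin by rewriting the system \eqref{eq:DeltamDot}--\eqref{eq:mBarDot} in the proposed coordinates $x = \dm$ and $y = (1-2\mb)/\epsilon$, so that $2\mb = 1 - \epsilon y$ and $\dot{\mb} = -(\epsilon/2)\dot y$. Under this substitution every factor $K(1-2\mb)$ collapses to $y$, curing the apparent $K = 1/\epsilon$ singularity in the drift terms of both $f_{\dm}$ and $f_{\mb}$, while the first bracketed pair of rational terms in each of \eqref{eq:DeltamDot} and \eqref{eq:mBarDot} is uniformly $O(\epsilon)$ on the relevant compact domain and can be absorbed as higher-order corrections.

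After this substitution the rewritten system takes the standard fast--slow form
\[
\dot x = F_0(x, y; \vb, \dv) + O(\epsilon), \qquad \epsilon \dot y = G_0(x, y; \vb, \dv, \sigma) + O(\epsilon),
\]
where $F_0$ and $G_0$ are polynomials in $(x,y)$ obtained by evaluating the remaining $\mb$-dependent factors at $\mb = 1/2$. In particular, the $\sigma$-dependence enters $G_0$ solely through the term $-(\sigma/2)(2\mb + \dm)(2\mb - \dm)$ in $f_{\mb}$, which limits to $-(\sigma/2)(1 - x^2)$, while the surviving $K$-proportional contributions produce the terms linear in $y$ that couple to $\vb$ and $\dv$.

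I would then locate the critical manifold $\mcS_0 = \{y = y^*(x)\}$ by solving the algebraic equation $G_0(x, y^*; \vb, \dv, \sigma) = 0$, which is linear in $y^*$ and yields a rational expression strictly positive on the physically relevant domain $|x| \leq 1$, $|\dv| < 2\vb$. Normal hyperbolicity follows immediately from $\partial G_0/\partial y < 0$ on $\mcS_0$, so the layer problem is exponentially attracting and Fenichel's theorem (equivalently Tikhonov's theorem) applies: trajectories off $\mcS_0$ relax onto it on the fast time scale, and the reduced slow flow is obtained by substituting $y = y^*(x)$ into $F_0$. Rewriting the resulting ratio using $\alpha = \dv/\vb$ — that is, factoring $\vb$ uniformly through numerator and denominator — then produces the rational expression claimed in \eqref{eq:xDotSlow}.

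The main obstacle is not conceptual but combinatorial: correctly tracking which pieces of \eqref{eq:DeltamDot} and \eqref{eq:mBarDot} survive the $\epsilon \to 0$ limit and which vanish, since both equations contain $K$-proportional drifts whose apparent divergence is cured only after the substitution $K(1-2\mb) = y$ makes the small-parameter cancellation explicit. A secondary concern is justifying the uniform $O(\epsilon)$ bound on the discarded rational terms across the compact physical domain; this is essentially immediate because the denominators $K(2\vb \pm \dv)$ do not vanish and the numerators are bounded. Once the fast--slow decomposition is established and $y^*(x)$ is computed, the remaining step is routine algebraic simplification.
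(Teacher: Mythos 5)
Your proposal follows essentially the same route as the paper's own proof (reproduced there for Theorem \ref{thm:reducedNodeDynamics}): pass to the coordinates $x=\dm$, $y=(1-2\mb)/\epsilon$, set $\epsilon=0$ to obtain the layer equation, solve the resulting equation $0=\tfrac{\sigma}{2}(1-x^2)-\tfrac{y}{2}(6\vb+\dv x)$ for the fast variable, and substitute $y=h(x)$ into the slow equation before factoring out $\vb$ to introduce $\alpha$. Your explicit checks of normal hyperbolicity ($\partial G_0/\partial y=-\tfrac{1}{2}(6\vb+\dv x)<0$ since $|\dv|<2\vb$) and of the uniform $O(\epsilon)$ bound on the discarded rational terms are sound refinements the paper leaves implicit, not a different argument.
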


A standard nonlinear time scaling argument then allows one to eliminate the denominator $6+\alpha x$ from \eqref{eq:xDotSlow} and makes the connection to the unfolding of the pitchfork bifurcation \eqref{eq:pitchfork-unfolding} explicit.
\begin{corollary}{\cite[Corollary 2]{PBR:19d}} \label{cor:singularUnfolding}
The singularly-perturbed motivation dynamics \eqref{eq:xDotSlow} are equivalent to 
\[ x^\prime = x(1-x^2) + \frac{3}{2} \alpha - \frac{3}{2} \alpha x^2, \]
i.e., an unfolding of the pitchfork bifurcation \eqref{eq:pitchfork-unfolding} with bifurcation parameter $\mu \mapsto 1$ and unfolding parameters $\alpha_1 = 3 \alpha/2$ and $\alpha_2 = -3\alpha/2$.
\end{corollary}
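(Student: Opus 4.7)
The plan is to exhibit an orbit-preserving nonlinear time reparametrization that converts \eqref{eq:xDotSlow} into the claimed cubic form, and then match coefficients with \eqref{eq:pitchfork-unfolding} by inspection.

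First I would factor the right-hand side of \eqref{eq:xDotSlow} as
\[ \dot x = \frac{\sigma/\vb}{6+\alpha x} \cdot \frac{1}{2}(1-x^2)(2x+3\alpha), \]
isolating the state-dependent positive scalar $g(x) := (\sigma/\vb)/(6+\alpha x)$ from the polynomial factor. Before introducing the rescaling I would verify that $g$ is smooth, positive, and bounded on the relevant invariant region: the hypotheses $\sigma,\vb>0$ give positivity of the numerator, and the physical bounds $x = \dm \in [-1,1]$ together with $|\alpha| = |\dv|/\vb < 2$ (inherited from $-2\vb < \dv < 2\vb$) yield $|\alpha x| < 2$, so $6 + \alpha x > 4$. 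Hence $g$ is uniformly positive and finite on the entire domain of interest.

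Next I would introduce a new time variable $\tau$ by $d\tau/dt = g(x(t))$; since $g>0$ this is an orientation-preserving $C^\infty$ reparametrization of time, so trajectories of the two systems coincide as oriented orbits, with equilibrium locations and their local stability types preserved. Writing $x' := dx/d\tau$, the chain rule gives
\[ x' = \frac{\dot x}{d\tau/dt} = \frac{1}{2}(1-x^2)(2x+3\alpha) = x(1-x^2) + \frac{3}{2}\alpha - \frac{3}{2}\alpha x^2, \]
which is exactly \eqref{eq:pitchfork-unfolding} under the identifications $\mu = 1$, $\alpha_1 = 3\alpha/2$, and $\alpha_2 = -3\alpha/2$.

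The algebra here is essentially routine once one commits to using a nonlinear time rescaling; the only non-trivial step is justifying that such a rescaling is the right notion of equivalence for the bifurcation analysis and that $g$ does not degenerate anywhere on the invariant interval. The first point is handled by recalling that equilibria, their linearized spectra (up to positive scaling), and the qualitative bifurcation diagram are all invariant under smooth positive reparametrization of time, so the unfolding structure is faithfully transported; the second is handled by the explicit bound $6 + \alpha x > 4$ derived above. I expect this latter bookkeeping to be the main obstacle, but it is controlled entirely by the a priori bounds on $x$ and $\alpha$ that come from the simplex constraint and the positivity of the option values.
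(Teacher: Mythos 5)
Your proposal is correct and follows exactly the route the paper indicates: the paper states only that ``a standard nonlinear time scaling argument'' eliminates the denominator $6+\alpha x$, and your reparametrization $d\tau/dt = (\sigma/\vb)/(6+\alpha x)$, together with the bound $6+\alpha x > 4$ from $|x|\leq 1$ and $|\alpha|<2$, is precisely that argument carried out in full. The algebra $\tfrac{1}{2}(1-x^2)(2x+3\alpha) = x(1-x^2) + \tfrac{3}{2}\alpha - \tfrac{3}{2}\alpha x^2$ and the parameter identification both check out.
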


The equilibria of the singularly-perturbed system \eqref{eq:xDotSlow} are shown in Figure \ref{fig:equilibria}. Note that the system has three equilibria for all possible values of $\alpha \in [-2,2]$. When $\alpha = 0$ the options are equally valued, the pitchfork is unperturbed, and the equilibria correspond to those of the standard pitchfork \eqref{eq:pitchfork} in the post-bifurcation regime. The equilibrium $x = 0$ is unstable, while those at $x= \pm 1$ are stable. For nonzero $\alpha$ the pitchfork unfolds. In the singularly-perturbed regime, the non-zero equilibria remain at $x = \pm1$, while the intermediate equilibrium shifts to $x = -3\alpha/2$. The intermediate equilibrium is unstable for the values of $\alpha \in [-2/3, 2/3]$ where it exists. The equilibria at $x = \pm 1$ are stable when the value difference $\dv$ is not too biased against the corresponding option. For example, $x=+1$ is a stable equilibrium of \eqref{eq:xDotSlow} for $\alpha \geq -2/3$.

The structure of equilibria shown in Figure \ref{fig:equilibria} determines the decision-making behavior of the model \eqref{eq:miDynamics} in the singular limit. The state $x = \dm = +1$ corresponds to the system committing fully to option 1, i.e., to $m = (1, 0, 0)$. This state is stable for $\alpha = \dv/\vb \geq -2/3$, and globally attracting for $\alpha > 2/3$. In other words, the system can commit to option 1 when when $\dv \geq -2 \vb/3$, and will be globally attracted to committing to option 1 when $\dv > 2 \vb/3$. Switching behavior can occur as $\alpha$ shifts. For example, suppose that the system \eqref{eq:xDotSlow} is initialized with $\alpha < 0$ and $\dm = -0.9$, representing a commitment to option 2. If $\alpha$ is then raised above the value $2/3$, the state $\dm$ will be attracted to the value $+1$, representing a decision to switch and commit to option 1. The rate at which $\dm$ is attracted to $+1$ is governed by the parameter $\sigma$, as can be seen from the form of the dynamics \eqref{eq:xDotSlow}.

We note that the coefficient $3/2$ arises from the last term $K\dv (1-2\mb)(1+\mb)$ in \eqref{eq:DeltamDot} and can be adjusted by changing the coefficient $1+\mb$ to $\beta + \mb$, which changes the coefficient $3/2 = 1 + 1/2$ to $\beta + 1/2$. This observation can be used to control the bifurcation properties of the system, e.g., by making it more or less sensitive to the relative value difference $\alpha$. We do not pursue this line of investigation further in the present paper, but recognize that it is a natural point of departure for further work.

\begin{figure}[htbp]
\vspace{-0.25cm}
\begin{center}
\includegraphics[width=0.7\textwidth]{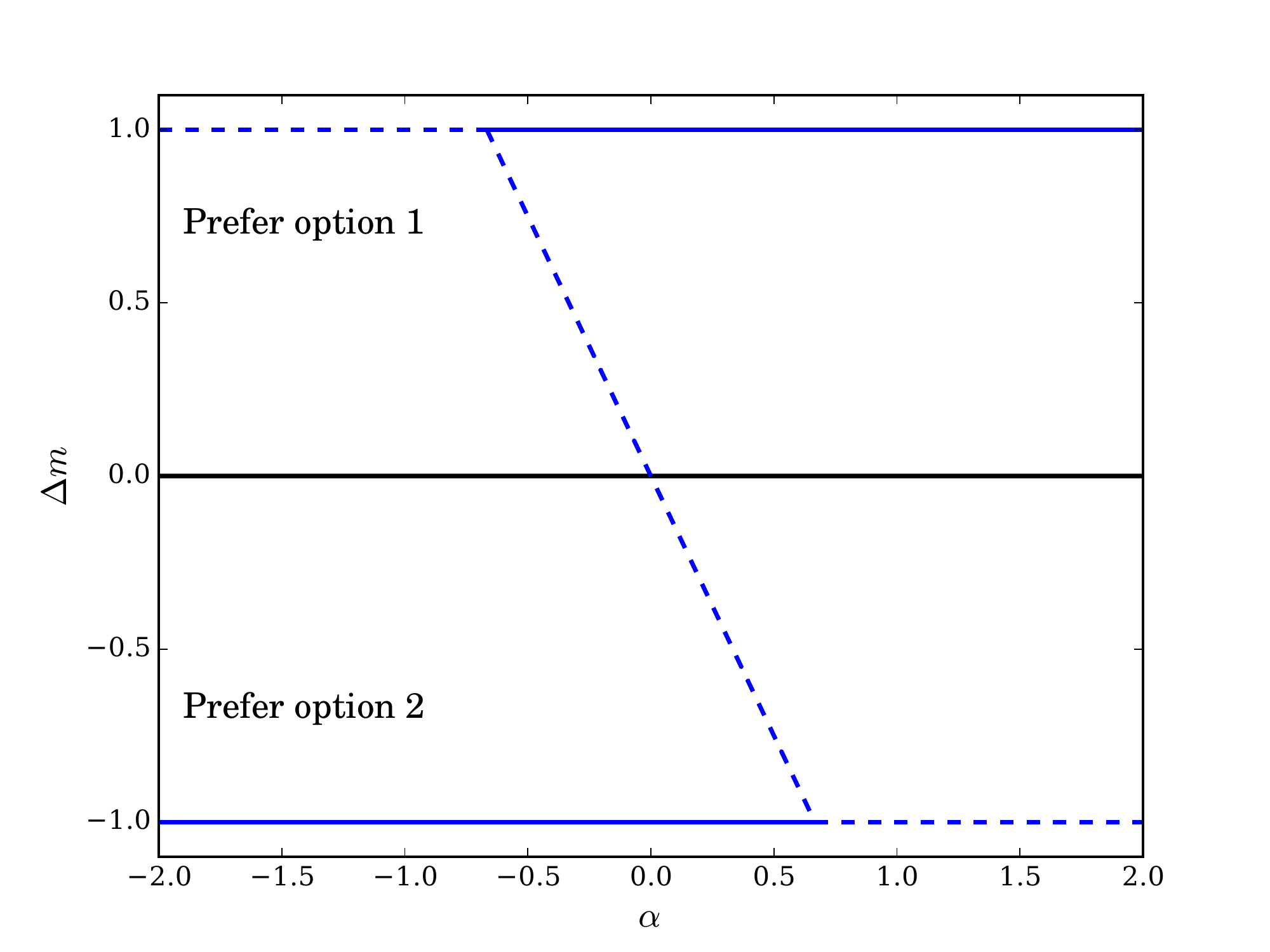}
\caption{Equilibria of the singularly-reduced dynamics \eqref{eq:xDotSlow} as a function of the unfolding parameter $\alpha$. Solid lines represent stable equilibria; dashed lines unstable equilibria. For sufficiently large $|\alpha| > 2/3$, only one equilibrium corresponding to a preference for the high-value option is stable.}
\label{fig:equilibria}
\end{center}
\vspace{-0.7cm}
\end{figure}

In this section, we introduced our requirements for a dynamical system model of value-sensitive decision making We showed how an unfolding of the pitchfork bifurcation can provide the fundamental mechanism for such a model in the case of $N=2$ options, and reviewed a model due to Seeley \etal~that embeds such a pitchfork mechanism along with some recent results reducing that model. In the following section we begin to construct a generalization of the Seeley \etal~model to the case $N>2$ by parsing the decision among $N$ options into a series of binary decisions represented by a tree structure.

\section{Parsing $N$-ary decisions into binary trees}
Inspired by the ideas presented in the previous section, we seek to develop a dynamical systems model of value-sensitive decision-making among $N$ options using an unfolding of the pitchfork bifurcation. The difficulty of this approach is that constructing an unfolding of the pitchfork bifurcation in $N$ dimensions is non-trivial. In this paper, we construct such an unfolding, and thus the desired model, by composing a series of unfoldings of a standard pitchfork bifurcations. The composition is structured by parsing a decision among $N$ options into a series of binary decisions. 

In this section we introduce the idea of parsing a decision among $N$ options into a series of binary decisions represented by a tree structure, and review a number of concepts, primarily from the computer science literature, on binary trees. As an example of an $N$-ary decision, consider the case of a professor who has a variety of tasks and must decide which one to focus on at any given time. She may decide on a task by making a series of binary decisions as shown in Figure \ref{fig:parsing}. For example, she might first decide between working on research or on teaching; given a decision to work on teaching, she may work on preparing a lecture or some assignments. The decision among five options is thus parsed into a series of binary decisions represented by a binary tree. A decision among an arbitrary number $N$ options can be parsed in this way.

We now define a number of terms associated with binary trees. A (rooted) tree is a connected acyclic undirected graph where one node is identified as the root. The \emph{parent} of a node $n$ is the node connected to $n$ on the path to the root, and the \emph{children} of a node $n$ are the nodes for which $n$ is the parent. Similarly, a \emph{descendant} of a node $n$ is any node which is a child of $n$ or is a descendent of any of the children of $n$. A \emph{sibling} of a node $n$ is any other node which shares a parent with $n$. A \emph{leaf} is a node with no children, while an \emph{internal} node is a node which is not a leaf. Finally, a binary tree is a tree where each node has at most two children, referred to as the \emph{left} and \emph{right} children. For such a tree, we refer to the descendants of a node $n$ associated with the right and left children as the \emph{right} and \emph{left} descendants, respectively. Note that when an arbitrary number $N$ of options is parsed into a binary tree $T$, the tree can be selected such that each internal node has two children. Such a tree is referred to as a \emph{full} or \emph{proper} binary tree.

We now formally define a parsing of a decision set.
\begin{definition}[Parsing]
Given a set of $n_o \geq 2$ options, a \emph{parsing} of these options is a proper rooted binary tree $T$ where each leaf node represents an option.
\end{definition}
Often, we will label the nodes with an index $i$. Then, the function $o$ maps leaf nodes to their associated option, i.e., $i \mapsto o(i)$. Figure \ref{fig:parsingLabels} shows the node labels associated with the parsing shown in Figure \ref{fig:parsing}. In this case, we have $o(2) = $ Experiment, $o(3) = $ Code, $o(5) =$ Lecture, $o(7) = $ Homework, and $o(8) =$ Exam.

\begin{figure}[ht]
\centering
\begin{tikzpicture}[level/.style={sibling distance=55mm/#1}]
\node [rectangle, draw] (root){}
	child {node [rectangle, draw] (a) {Research}
		child {node [rectangle, draw] (b) {Experiment}}
		child {node [rectangle, draw] (c) {Code}}
	}
	child {node [rectangle, draw] (g) {Teaching}
		child {node [rectangle, draw] (d) {Lecture}}
		child {node [rectangle, draw] (e) {Assignments}
			child {node [rectangle, draw] (f) {Homework}}
			child {node [rectangle, draw] (h) {Exam}}
			}
	};
\end{tikzpicture}
\caption{Parsing a decision among $N$ options into a series of binary decisions.}
\label{fig:parsing}
\end{figure}

\begin{figure}[ht]
\centering
\begin{tikzpicture}[level/.style={sibling distance=55mm/#1}]
\node [circle, draw] (root){0}
	child {node [circle, draw] (a) {1}
		child {node [circle, draw] (b) {2}}
		child {node [circle, draw] (c) {3}}
	}
	child {node [circle, draw] (g) {4}
		child {node [circle, draw] (d) {5}}
		child {node [circle, draw] (e) {6}
			child {node [circle, draw] (f) {7}}
			child {node [circle, draw] (h) {8}}
			}
	};
\end{tikzpicture}
\caption{Node labels associated with the parsing shown in Figure \ref{fig:parsing}. The function $o$ maps leaf nodes to options, e.g., $o(2)=$ Experiment.}
\label{fig:parsingLabels}
\end{figure}

\subsection{Tree traversal}
Tree traversal is a fundamental process acting on a tree, wherein the process visits (and carries out an action on) each node in the tree exactly once. Trees may be traversed in either \emph{depth-first} or \emph{breadth-first} orders, as depicted in Figure \ref{fig:treeTraversal}. As their names imply, depth-first order operates by going as deep down the tree as possible before going to the next sibling, while breadth-first order operates by going through each sibling before going to a descendant. The nodes in Figure \ref{fig:treeTraversal} are labeled according to the order in which they will be visited in depth-first or breadth-first traversal.

\begin{figure}[ht]
\centering
\begin{tabular}{cc}
\begin{tikzpicture}[level/.style={sibling distance=40mm/#1}]
\node [circle, draw] (root){$0$}
	child {node [circle, draw] (a) {1}
		child {node [circle, draw] (b) {2}
			child {node [circle, draw] (f) {3}}
			child {node [circle, draw] (h) {4}}
			}
		child {node [circle, draw] (c) {5}}
	}
	child {node [circle, draw] (g) {6}
		child {node [circle, draw] (d) {7}}
		child {node [circle, draw] (e) {8}}
	};
\end{tikzpicture}
&
\begin{tikzpicture}[level/.style={sibling distance=40mm/#1}]
\node [circle, draw] (root){$0$}
	child {node [circle, draw] (a) {1}
		child {node [circle, draw] (b) {3}
			child {node [circle, draw] (f) {7}}
			child {node [circle, draw] (h) {8}}
			}
		child {node [circle, draw] (c) {4}}
	}
	child {node [circle, draw] (g) {2}
		child {node [circle, draw] (d) {5}}
		child {node [circle, draw] (e) {6}}
	};
\end{tikzpicture}\\
Ordered node list: $(0,1,2,3,4,5,6,7,8)$ & Ordered node list: (0,1,3,7,8,4,2,5,6)\\
Ordered option list: (3,4,5,7,8) & Ordered option list: (7,8,4,5,6) \\
\textbf{(a)} & \textbf{(b)}
\end{tabular}
\caption{Depth-first \textbf{(a)} versus breadth-first \textbf{(b)} traversal of a binary tree. The nodes are labeled with numbers according to the order in which they will be visited during traversal.}
\label{fig:treeTraversal}
\end{figure}

\subsection{Tree paths}
A \emph{path} in a finite tree is a finite sequence of edges which joins a sequence of nodes. For a rooted tree, there is always a unique shortest path from the root to any other node. We denote the sequence of nodes along the shortest path from the root to node $i$ as $p_i$ and denote its $j^{th}$ element as $p_{ij}$. The sequence $p_i$ begins with the root node and ends with the node $i$. The number of nodes in $p_i$ is denoted $|p_i|$.

\subsection{Tree isomorphisms}
Trees may have important symmetries. For example, the nodes of a tree may be rearranged without changing the structure it represents. Two trees which share the same structure are said to be \emph{isomorphic}. The concept of tree isomorphism is inherited from the concept of graph isomorphism \cite{WD-MJD:89}, for which tree isomorphisms are a special case.
\begin{definition}[Rooted tree isomorphism] \label{def:treeIsomorphism}
Let $T_1$ and $T_2$ be two rooted trees with node sets $N_1$ and $N_2$, edge sets $E_1$ and $E_2$, and roots $r_1 \in N_1$ and $r_2 \in N_2$, respectively. An \emph{isomorphism} of $T_1$ and $T_2$ is a bijection between the node sets $\varphi : N_1 \to N_2$ such that
\[ \forall u,v \in N_1 \ \ (u,v) \in E_1 \Leftrightarrow (\varphi(u), \varphi(v)) \in E_2 \]
and $\varphi(r_1) = r_2$.
\end{definition}
In words, a rooted tree isomorphism is a mapping between the node sets such that each edge is preserved, along with the root node. An example, Figure \ref{fig:treeIsomorphism} shows two isomorphisms of the tree presented in Figure \ref{fig:treeTraversal}(a). Note that isomorphisms of binary trees are generated by flips at nodes, wherein the left and right descendants of a given node are exchanged.

Problems associated with tree isomorphisms arise in many applications. In particular, a standard problem in computer science is to determine whether two rooted trees $T_1$ and $T_2$ are isomorphic. A classic algorithm due to Aho, Hopcroft, and Ullman \cite{AVA-JEH-JDU:74} solves the problem in $O(n)$ time for trees with $n$ vertices.

\begin{figure}[ht]
\centering
\begin{tabular}{cc}
\begin{tikzpicture}[level/.style={sibling distance=40mm/#1}]
\node [circle, draw] (root){$0$}
	child {node [circle, draw] (g) {6}
		child {node [circle, draw] (d) {7}}
		child {node [circle, draw] (e) {8}}
	}
	child {node [circle, draw] (a) {1}
		child {node [circle, draw] (b) {2}
			child {node [circle, draw] (f) {3}}
			child {node [circle, draw] (h) {4}}
			}
		child {node [circle, draw] (c) {5}}
	};
\end{tikzpicture}
&
\begin{tikzpicture}[level/.style={sibling distance=40mm/#1}]
\node [circle, draw] (root){$0$}
	child {node [circle, draw] (a) {1}
		child {node [circle, draw] (c) {5}}
		child {node [circle, draw] (b) {2}
			child {node [circle, draw] (f) {3}}
			child {node [circle, draw] (h) {4}}
			}
	}
	child {node [circle, draw] (g) {6}
		child {node [circle, draw] (d) {7}}
		child {node [circle, draw] (e) {8}}
	};
\end{tikzpicture}\\
Ordered node list: (0,6,7,8,1,2,3,4,5) & Ordered node list: (0,1,5,2,3,4,6,7,8)\\
Ordered option list: (7,8,3,4,5) & Ordered option list: (5,3,4,7,8) \\
\textbf{(a)} & \textbf{(b)}
\end{tabular}
\caption{Tree isomorphisms are generated by flips at nodes. Here we show two isomorphisms of the tree presented in Figure \ref{fig:treeTraversal}(a), keeping the node numbers from the previous figure. Panel (a): isomorphism generated from flipping at node 0, i.e., applying $\gamma_0$. Panel (b): isomorphism generated from flipping at node 1, i.e., applying $\gamma_1$.}
\label{fig:treeIsomorphism}
\end{figure}

\subsection{Symmetry group of a tree and of options}
The set of isomorphisms of a given tree exhibit a group structure. The elements of the group are generated by the flips at internal nodes described above and the group operation is given by composition. It is clear that each flip is its own inverse, as exchanging left and right descendants of a node twice leaves the tree unchanged. Flips may be carried out in any order, so the operation is associative, and the identity is the element consisting of no flips.

When a tree $T$ is a parsing of a set of $n_o$ options, isomorphisms of the tree generate isomorphisms of the option set $\oneTo{n_o}$. Recall that an isomorphism of $T$ is a bijection $\varphi$ from the node set of $T$ to itself. Thus, a node $i$ is mapped to $j = \varphi(i)$ and the set $\{o(i): i \in \oneTo{n_o} \}$ is mapped to $\{ o(\varphi(i)): i \in \oneTo{n_o} \}$. The group of all possible isomorphisms of $n_o$ objects is $S_{n_o}$. Note, however, that not all such isomorphisms can be generated by the set of tree isomorphisms. For example, nodes that are siblings must remain siblings even under isomorphism operations.

Let the tree $T$ be a parsing of a set of $n_o$ options. The set of isomorphisms the options that can be generated by isomorphisms of $T$ forms a group $\Gamma_T \leq S_{n_o}$ whose structure is given by a wreath product of copies of $S_2$. This can be seen as follows. Let $i$ be an internal node in the tree $T$ and let $ro(i)$ and $lo(i)$ denote the set of options associated with its right and left descendants, respectively. For example, for the tree in Figure \ref{fig:treeIsomorphism}(a), $ro(0) = \{o(3), o(4), o(5)\}$ and $lo(0) = \{o(7), o(8) \}$. Let $\gamma_i$ represent the action of performing a flip at node $i$. Then $\gamma_i$ exchanges the sets $ro(i)$ and $lo(i)$. Explicitly, we have
\beq
\gamma_i : (ro(i), lo(i)) \mapsto (lo(i), ro(i)).
\eeq
Note that applying $\gamma_i$ twice results in the identity mapping, so $\gamma_i$ generates the permutation group $S_2$ acting on the set $\{ ro(i), lo(i) \}$. Furthermore, any options which are not associated with the descendants of node $i$ are unaffected. Thus, one can think of $\gamma_i$ as generating a representation of $S_2$ acting on the set $\oneTo{n_o}$; this is trivially a subgroup of $S_{n_o}$. Applying $\gamma_j$ for another internal node $j \neq i$ generates another representation of $S_2$. The group generated by application of both actions $\gamma_i$ and $\gamma_j$ is the wreath product $S_2 \wr S_2$. This process can be extended for each internal node $i$, yielding a symmetry group which is the repeated wreath product of copies of $S_2$. Formally we have the following Proposition.
\begin{proposition}[Symmetry group of a parsing] \label{prop:symmetryGroup}
Let the tree $T$ be a parsing of a set of $n_o$ options. Denote the number of internal nodes of $T$ by $n_i$. The symmetry group $\Gamma_T$ associated with the isomorphisms of $T$ is given by the $n_i$-fold wreath product of $S_2$
\beq
\Gamma_T = \underbrace{S_2 \wr S_2 \wr \cdots \wr S_2}_{n_i \text{ times}} \leq S_{n_o}.
\eeq
\end{proposition}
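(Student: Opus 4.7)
The plan is to proceed by strong induction on the number $n_i$ of internal nodes of the parsing tree $T$. For the base case ($n_i = 1$), the tree consists of a root with two leaf children, and the only generator of $\Gamma_T$ is the root flip $\gamma_r$, which swaps the two options. Hence $\Gamma_T = \langle \gamma_r \rangle \cong S_2$, matching the $1$-fold wreath product.

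For the inductive step, let $n_i > 1$ and let $r$ denote the root of $T$, with children rooting the left and right subtrees $T_L$ and $T_R$. Each of $T_L, T_R$ is either a single leaf (contributing zero internal nodes) or a proper binary subtree, with $n_L + n_R = n_i - 1$. By strong induction, $\Gamma_{T_L}$ and $\Gamma_{T_R}$ are the $n_L$-fold and $n_R$-fold iterated wreath products of $S_2$ respectively, with the convention that the $0$-fold wreath product is the trivial group.

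Next I would partition the generators of $\Gamma_T$ into three classes: flips at internal nodes of $T_L$, flips at internal nodes of $T_R$, and the root flip $\gamma_r$. Since flips in $T_L$ permute only options in $lo(r)$ while flips in $T_R$ permute only options in $ro(r)$, and these option sets are disjoint, the two families commute elementwise and together generate the internal direct product $\Gamma_{T_L} \times \Gamma_{T_R}$. Then, because $\gamma_r$ exchanges $lo(r)$ with $ro(r)$, its conjugation action sends each flip at an internal node of $T_L$ to the flip at the corresponding node of $T_R$ (and vice versa) under the left-right swap at the root. Thus $\gamma_r$ normalizes $\Gamma_{T_L} \times \Gamma_{T_R}$ and acts by exchanging its two direct factors, and using $\gamma_r^2 = e$ one obtains
\[ \Gamma_T \cong (\Gamma_{T_L} \times \Gamma_{T_R}) \rtimes \langle \gamma_r \rangle, \]
which is precisely the step of adjoining one additional $S_2$ factor in the iterated wreath product. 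Tallying the factors yields $n_L + n_R + 1 = n_i$ copies of $S_2$, completing the induction.

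The main obstacle I anticipate is making the normalizing step rigorous when $T_L$ and $T_R$ are not structurally isomorphic, since then the ``corresponding node'' map used implicitly in the conjugation argument is not canonical across mismatched portions of the subtrees. I would handle this by interpreting the iterated wreath product through its inductive definition, adjoining one $S_2$ factor per internal node with the semidirect action inherited directly from the recursive splitting of $T$, rather than appealing to any global symmetry between $T_L$ and $T_R$; this also justifies the convention that a leaf subtree contributes a trivial factor and ensures the count of $S_2$ factors matches $n_i$ for arbitrary (unbalanced) proper binary trees.
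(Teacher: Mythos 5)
Your proof is correct in substance but organizes the argument differently from the paper. The paper justifies the proposition with a generator-accumulation argument: each flip $\gamma_i$ generates a copy of $S_2$ acting on $\{ro(i), lo(i)\}$, adjoining a second flip $\gamma_j$ is asserted to give $S_2 \wr S_2$, and the construction is "extended for each internal node" to obtain the repeated wreath product. You instead do a structural induction on the tree, decomposing $\Gamma_T \cong (\Gamma_{T_L} \times \Gamma_{T_R}) \rtimes \langle \gamma_r \rangle$ at the root. Your version buys two things the paper's does not. First, it correctly distinguishes the two ways a pair of flips can interact: flips at incomparable nodes commute and contribute a \emph{direct} factor, while only the parent--descendant relation produces the semidirect (wreath) structure; the paper's blanket claim that $\langle \gamma_i, \gamma_j \rangle = S_2 \wr S_2$ is literally false for sibling nodes (it gives $S_2 \times S_2$, order $4$, not $8$). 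Second, your tally $n_L + n_R + 1 = n_i$ pins down the only reading of "$n_i$-fold wreath product" under which the proposition can hold, namely the total number of $S_2$ factors in the fully expanded recursive product (e.g., $S_2 \wr S_2 = (S_2 \times S_2) \rtimes S_2$ counts as three factors for the complete four-leaf tree with $n_i = 3$); read as $n_i$ applications of $\wr$, the group would have order $2^{2^{n_i}-1}$ and could not embed in $S_{n_o}$.

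The obstacle you flag at the end is real and is shared with (indeed, elided by) the paper: when $|lo(r)| \neq |ro(r)|$, the block exchange $\gamma_r$ is not an involution as a fixed element of $S_{n_o}$ (applying the "same" exchange from the flipped configuration is a different permutation), so the paper's assertion that $\gamma_i^2$ is the identity, and hence the clean semidirect product $\rtimes \langle \gamma_r \rangle$, only holds verbatim when the two subtrees are isomorphic with matching value labels. Your proposed fix---defining the adjoined $S_2$ factor through the recursive splitting rather than as a global conjugation---is the right instinct and is at least as rigorous as what the paper provides, but to be fully airtight one should either restrict the root factor to the trivial group when $T_L \not\cong T_R$ or treat the flips as acting on tree configurations rather than as fixed permutations.
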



\section{A recursively-defined vector field} \label{sec:vectorField}
In this section we show how to construct a decision-making vector field for a given parsing of a finite set of options. We suppose we have a finite set of $n_o$ options and a tree $T$ which is a parsing of the options. Furthermore, each option $i$ is associated with a scalar $v_i > 0$ that encodes its importance or value. We seek a vector field $f$ operating on the state space $\Delta^{n_o}$ with attracting fixed points associated with the high-value options.

We label the $n_n$ nodes of $T$ with an index $i$, with the root node having the index $i=0$. We define the following notation to describe the tree structure. For a node $i$, we denote its parent by $\parent(i)$ and its left and right child nodes by $\lchild(i)$ and $\rchild(i)$, respectively. The descendants of a node $i$ consist recursively of the node's children $\lchild(i)$ and $\rchild(i)$ along with the descendants of the children. We denote the descendants of a node $i$ by $\desc(i)$. The descendants can be partitioned into left descendants and right descendants, which consist of the left child and its descendants and the right child and its descendants, respectively. For a node $i$, we denote the left descendants by $\ldesc(i)$ and the right descendants by $\rdesc(i)$, respectively. Recall that leaf nodes represent options. As above, let $o(i)$ be the option associated with a leaf node $i$.

To each node $i$ we associate the state $z_i \in \bbR$ and the value $u_i > 0$. Furthermore, to each internal (non-leaf) node $i$ we associate states $\bfz_i \in \bbR^2, \bfm_i \in \Delta^2$, and $\bfv_i \in \bbR_+^2$. These additional states represent quantities associated with the node's children. We denote the $j^{th}$ component of $\bfz_i, \bfm_i$, and $\bfv_i$ by $z_{ij}, m_{ij},$ and $v_{ij}$, respectively.

The states $z_i$ are defined recursively as follows. Let $z_0=1$. Then, for $i \geq 0, z_{lc(i)} = z_i m_{i1}$ and $z_{rc(i)} = z_i m_{i2}$. Alternatively, for $i \geq 1, z_i = z_{\parent(i)} m_{\parent(i)j}$, where
\[ j = \begin{cases}
1, & \text{if } i = \lchild(\parent(i)),\\
2, & \text{if } i = \rchild(\parent(i)).
\end{cases}\]
The state vector $\bfz_i$ associated with internal node $i$ has components $\bfz_i = (z_{\lchild(i)}, z_{\rchild(i)})^T = z_i \bfm_i$. Note that the definition of $z_i$ is invertible for $z_{\parent(i)} \neq 0$: in this case, we have $\bfm_i = \bfz_i/z_i$. The components are $m_{\parent(i)j} = z_i/z_{\parent(i)}$, where $j=1$ if $i = \lchild(\parent(i))$ and $j=2$ if $i = \rchild(\parent(i))$. 
Similarly, for a node $i$, $u_i$ is equal to the mean of the associated children's values
\beq \label{eq:internalV}
u_i = \begin{cases}
v_{o(i)}, & \text{if $i$ is a leaf}, \\
(v_{\lchild(i)} + v_{\rchild(i)})/2, & \text{if $i$ is an internal node.}
\end{cases}
\eeq
Then, for an internal node $i$, $v_{i1} = u_{\lchild(i)}$ and $v_{i2} = u_{\rchild(i)}$.

We endow the motivation states $\bfm_i$ associated with an internal node $i$ with dynamics $\dot \bfm_i = f(\bfm_i, \bfv_i)$, where $f$ is the Seeley \etal~dynamics \eqref{eq:miDynamics}. The dynamics of the overall tree $T$ consists of copies of the dynamics $f$ defined as follows. Recall that $n_i$ is the number of internal nodes of $T$ and let $\bfm \in \bbR^{2n_i}$ be the vector consisting of the stacked node states $\bfm_i$. Note that the definition of $\bfm$, i.e., the order in which the $\bfm_i$ are stacked, is arbitrary: different orders correspond to permutations of the coordinates. The structure of the dynamics is encoded in the tree structure, i.e., the parent-child relationships given by the functions $\parent, \lchild$, and $\rchild$. For practical purposes of performing computations, one chooses a scheme for numbering the coordinates that corresponds to a scheme for traversing the nodes of the tree. We choose to traverse the tree $T$ in a depth-first manner and define $\bfm$ by
\[ \bfm = (\bfm_0^T, \underbrace{\bfm_{\lchild(0)}^T, \bfm_{\lchild(\lchild(0))}^T, \ldots}_{\text{left descendants of node }0}, \underbrace{\bfm_{\rchild(0)}^T, \bfm_{\lchild(\rchild(0))}^T, \ldots}_{\text{right descendants of node }0})^T. \]
The dynamics of $\bfm$ are defined by stacking the dynamics of the component states $\bfm_i$:
\bal \label{eq:mDotTree}
\dot \bfm = f_m(\bfm, \bfv) &= (\dot\bfm_0^T, \dot \bfm_{\lchild(0)}^T, \dot \bfm_{\lchild(\lchild(0))}^T, \ldots, \dot \bfm_{\rchild(0)}^T, \dot \bfm_{\lchild(\rchild(0))}^T, \ldots)^T\\
&=(f(\bfm_0, \bfv_0)^T, f(\bfm_{\lchild(0)}, \bfv_{\lchild(0)})^T, f(\bfm_{\lchild(\lchild(0))}, \bfv_{\lchild(\lchild(0))})^T, \ldots, \nonumber \\
& \ \ \ \ \ \ \ \ \ \ \ \ \ \ \ \ \ \ \ \ \ f(\bfm_{\rchild(0)}, \bfv_{\rchild(0)})^T, f(\bfm_{\lchild(\rchild(0))}, \bfv_{\lchild(\rchild(0))})^T, \ldots)^T, \nonumber
\end{align}
where $\bfv \in \bbR^{2n_i}$ is the vector of the node value states $\bfv_i$ stacked in depth-first order.

The dynamics of $\bfm_i$ then defines dynamics of the states $\bfz_i$ by a simple change of coordinates. Recall that $\bfz_i = (z_{\lchild(i)}, z_{\rchild(i)})^T = z_i \bfm_i$ with $z_0 = 1$. Then $\dot \bfz_i = \dot z_i \bfm_i + z_i \dot \bfm_i = \dot z_i \bfm_i + z_i f(\bfm_i, \bfv_i)$. As above, we construct the vector $\bfz \in \bbR^{2n_i}$ by stacking the individual $\bfz_i$ in depth-first order. When $z_i \neq  0$, we have $\bfm_i = \bfz_i/z_i$, so $\dot \bfz_i$ can be written in terms of $\bfz$. We denote the resulting dynamics of $\bfz$ by
\beq \label{eq:zDot}
\dot \bfz = f_z(\bfz, \bfv).
\eeq
In the following section we show that the functions $f_m$ and $f_z$ are equivariant under changes of coordinates that correspond to isomorphisms of the tree $T$.

Let $\bfm_o \in \Delta^{n_o}$ be the state that represents the system's decision among the $n_o$ options. We relate the state $\bfz$ to $\bfm_o$ by projection onto $\bbR^{n_o+1}$. Let $o^{-1} : \oneTo{n_o} \to \oneTo{n_n}$ be the function that maps from an option $i$ to its associated leaf node.
\begin{definition}[Projected dynamics]
Let $T$ be a parsing of $n_o$ options, let $g$ be the dynamics defined by \eqref{eq:zDot}, and let $h: \bbR^{n_n} \to \bbR^{n_o+1}$ be the projection that reads off the elements of $\bfz$ that correspond to the leaves of the tree $T$. Explicitly, $h : \bfz \mapsto \bfm_o$ by 
\begin{equation} \label{eq:projection}
(\bfm_o)_i = m_{oi} = \begin{cases} 
z_{o^{-1}(i)}, & i \in \{ 1, 2, \ldots, n_o \}\\
1-\sum_{i=1}^{n_o} m_{oi}, & i = n_o + 1.
\end{cases}
\end{equation}
The projection $h$ then defines the dynamics of the projected state $\bfm_o$ by
\begin{equation} \label{eq:projectedDynamics}
\dot \bfm_o = f_o(\bfz, \bfv) = h(f_z(\bfz, \bfv)).
\end{equation}
\end{definition}
The projected dynamics \eqref{eq:projectedDynamics} leave the simplex $\Delta^{n_o}$ invariant.

\begin{theorem}
Let $T$ be a parsing of $n_o$ options, let $f_z$ be the dynamics defined by \eqref{eq:zDot}, and let $h: \bbR^{n_n} \to \bbR^{n_o+1}$ be the projection \eqref{eq:projection} that reads off the elements of $\bfz$ that correspond to the leaves of the tree $T$. Let $f_o$ be the dynamics defined by \eqref{eq:projectedDynamics}. Then $f_o$ leaves the simplex $\Delta^{n_o}$ invariant.
\end{theorem}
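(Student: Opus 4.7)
The plan is to separate the two parts of the simplex constraint --- the affine condition $\sum_{i=1}^{n_o+1} m_{oi} = 1$ and the positivity condition $m_{oi} \geq 0$ --- and handle them in sequence. The affine condition is essentially built into the projection $h$ of \eqref{eq:projection}: the $(n_o+1)$-st coordinate is defined as $1 - \sum_{i=1}^{n_o} m_{oi}$, so the sum is identically one on the image of $h$. Correspondingly, the last component of $f_o$ is $-\sum_{i \in \mcL} \dot z_i$, where $\mcL$ denotes the set of leaf nodes, so $\sum_{i=1}^{n_o+1} \dot m_{oi} \equiv 0$ and the affine constraint is preserved along trajectories of \eqref{eq:projectedDynamics}.

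For positivity I would work in the underlying $\bfz$ coordinates. For each $i \in \oneTo{n_o}$ we have $m_{oi} = z_{o^{-1}(i)}$, so I need each leaf $z_i(t) \geq 0$, and for the remaining coordinate I need $\sum_{i \in \mcL} z_i(t) \leq 1$. The engine is the fact that each internal-node copy of the Seeley \etal\ dynamics $\dot \bfm_i = f(\bfm_i, \bfv_i)$ leaves $\Delta^2$ invariant. This can be dispatched with the standard boundary-flow test on the triangle: inspection of \eqref{eq:miDynamics} shows that on the face $\{m_{ij} = 0\}$ one obtains $\dot m_{ij} = v_{ij} m_{iU} \geq 0$, and a similar sign check on $\{m_{iU} = 0\}$ confirms that the flow is inward on each face. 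Granting this, every internal-node state satisfies $\bfm_i(t) \in \Delta^2$ for all $t \geq 0$.

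With $\bfm_i(t) \in \Delta^2$ in hand, I would close the argument by a single induction on tree depth using the recursion
\[ z_{\lchild(i)}(t) + z_{\rchild(i)}(t) = z_i(t)\bigl(m_{i1}(t) + m_{i2}(t)\bigr) = z_i(t)\bigl(1 - m_{iU}(t)\bigr). \]
Since $m_{i1}, m_{i2}, m_{iU} \in [0,1]$, whenever $z_i(t) \geq 0$ one has $z_{\lchild(i)}(t), z_{\rchild(i)}(t) \in [0, z_i(t)]$ and $z_{\lchild(i)}(t) + z_{\rchild(i)}(t) \leq z_i(t)$. Starting from $z_0 = 1$ the induction yields $z_i(t) \in [0,1]$ at every node, and telescoping the contraction $z_{\lchild(i)} + z_{\rchild(i)} \leq z_i$ down from the root to any antichain that exhausts the leaves gives $\sum_{i \in \mcL} z_i(t) \leq z_0 = 1$, exactly the bound needed for $m_{o,n_o+1}(t) \geq 0$.

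The main obstacle I anticipate is verifying the $\Delta^2$-invariance of the Seeley \etal\ dynamics at each internal node; once that is either cited or settled by the short boundary-flow calculation sketched above, the rest of the proof is a clean recursion through the tree. A secondary technical point is the degenerate case $z_i(t) = 0$ at an interior node, where the inversion $\bfm_i = \bfz_i/z_i$ fails; but the invariance claim only needs non-negativity of the $\bfz$ coordinates, so the whole subtree below such a node is pinned at zero and contributes zero to the leaf sum, which is consistent with $\Delta^{n_o}$ and does not obstruct the argument.
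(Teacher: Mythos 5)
Your proposal is correct and follows essentially the same route as the paper: both arguments rest on the $\Delta^2$-invariance of each internal node's copy of the Seeley \etal~dynamics \eqref{eq:miDynamics} and propagate the resulting bound $z_{\lchild(i)} + z_{\rchild(i)} \leq z_i$ through the tree (you telescope from the root down to the leaves, the paper inducts from the leaves up, but it is the same computation). If anything you are slightly more careful than the paper, which asserts the $\Delta^2$-invariance of \eqref{eq:miDynamics} without the boundary-flow check and does not comment on the degenerate case $z_i = 0$.
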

\begin{proof}
Let $\bfm$ be the projected state. Proving the claim reduces to showing that $m_i \geq 0$ and that $\sum_{i=1}^{n_o} m_i \leq 1$. We proceed by induction from the bottom of the tree. Let $i$ be a generic leaf node of $T$. By definition, $T$ is a proper binary tree, so $i$ has a sibling. Furthermore, since $i$ is a leaf node, it has a parent. Let $j$ denote the sibling of $i$ and $\parent(i)$ the common parent of $i$ and $j$. By the definition of the projection $h$, we have $m_{o(i)} = z_i, m_{o(j)} = z_j$. By the definition of $\bfz_i$, we have 
\[ \bfz_{\parent(i)} = (z_i, z_j) = z_{\parent(i)} \bfm_{\parent(i)} = z_{\parent(i)} (m_{\parent(i)1}, m_{\parent(i)2}). \]

Note that the dynamics \eqref{eq:miDynamics} of $\bfm_{\parent(i)}$ leave the simplex $\Delta^2$ invariant. This implies that the components $m_{\parent(i)j} \geq 0$ for $i = 1, 2$, and 
\beq \label{eq:miSumBound}
m_{\parent(i)1} + m_{\parent(i)2} \leq 1.
\eeq
Furthermore, we have that $z_i= m_{o(i)}$ and $z_j= m_{o(j)}$ are both non-negative.
Multiplying the expression \eqref{eq:miSumBound} by $z_{\parent(i)}$ yields the bound
\[ m_{o(i)} + m_{o(j)} = z_i + z_j \leq z_{\parent(i)}. \]
Thus, the sum of the $m_i$ associated with descendants of node $k = \parent(i)$ is upper bounded by $z_{k}$. The analogous argument holds for the parent of node $k$, and thus we can inductively work our way up the tree. At each node $l$, the sum of the $m_i$ associated with descendants of node $l$ is upper bounded by $z_l$.

The base case of the inductive argument is the root node $i=0$. All $n_o$ options are descendant from the root node, so we have $\sum_{i=1}^{n_o} m_i \leq z_0 = 1$, as desired.
\end{proof}

\section{The vector field is equivariant under tree isomorphisms}
Recall from Definition \ref{def:treeIsomorphism} above that two rooted trees $T_1$ and $T_2$ are said to be \emph{isomorphic} if there exists a bijection mapping between the nodes of $T_1$ and $T_2$ that preserves the root node. The vector field $f_z$ defined by \eqref{eq:zDot} and its projection $f_o$ defined by \eqref{eq:projectedDynamics} are equivariant under changes of coordinates which correspond to tree isomorphisms.

Let $T_1$ and $T_2$ be two isomorphic trees. By definition they must have the same number $n_n$ of nodes. The isomorphism between the trees is a bijection between the node sets of $T_1$ and $T_2$. In other words, it is a bijective map $\sigma : \oneTo{n_n} \to \oneTo{n_n}$. This is precisely the definition of a permutation.
\begin{definition} \label{def:permutation}
Let $T_1$ and $T_2$ be two isomorphic trees each with $n_n$ nodes. The map $\sigma : \oneTo{n_n} \to \oneTo{n_n}$ associated with the isomorphism between the trees is called the \emph{node permutation} corresponding to the isomorphism.
\end{definition}

The dynamics \eqref{eq:zDot} obey symmetries that correspond to isomorphisms of the underlying tree $T$. Formally, the dynamics \eqref{eq:zDot} are said to be \emph{equivariant}.
\begin{definition} \label{def:equivariance}
Let $X=\bbR^n$ and suppose that $\Gamma$ is a compact Lie group acting on $X$. Then a mapping $F: X \times \bbR \to X$ is \emph{$\Gamma$-equivariant} if and only if
\[ F(\gamma x, \lambda) = \gamma F(x, \lambda) \]
for all $\gamma \in \Gamma$, where $\lambda \in \bbR$ is a parameter.
\end{definition}

The Seeley~\etal~dynamics \eqref{eq:miDynamics} obey a symmetry that correspond to swapping the labels of the two options. When the option values are identical, the dynamics are $S_2$-equivariant.
\begin{lemma} \label{lem:seeleySymmetry}
Let $\pi_2 \in S_2$ represent the permutation of two elements. The Seeley~\etal~dynamics $f$ defined by \eqref{eq:miDynamics} are preserved under the action of $\pi_2$. Specifically, we have
\[ f(\pi_2 \bfm, \pi_2 \bfv) = \pi_2 f(\bfm, \bfv). \]
When $\bfv = (v, v)^T$, $f(\pi_2 \bfm, \bfv) = \pi_2 f(\bfm, \bfv)$, and the dynamics are $S_2$-equivariant.
\end{lemma}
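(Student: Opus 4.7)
The plan is to verify the identity directly from the explicit form of the dynamics \eqref{eq:miDynamics}. The key observation is that the right-hand side of \eqref{eq:miDynamics} has a uniform functional form in the index $i$, and that the uncommitted state $m_U = 1 - m_1 - m_2$ is manifestly invariant under exchange of $m_1$ and $m_2$.

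First, I would write $\pi_2 \bfm = (m_2, m_1)^T$ and $\pi_2 \bfv = (v_2, v_1)^T$, and note that since $m_U$ is a symmetric function of $(m_1, m_2)$, the value of $m_U$ is the same whether evaluated at $\bfm$ or at $\pi_2 \bfm$. Next, I would introduce the scalar function
\[ g(a, b, c) = b c - a\left( \tfrac{1}{b} - b c + \sigma (1 - a - c) \right), \]
so that the components of $f$ can be written uniformly as $f_i(\bfm, \bfv) = g(m_i, v_i, m_U)$ for $i = 1, 2$.

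Using this notation, the components of $f(\pi_2 \bfm, \pi_2 \bfv)$ are $(g(m_2, v_2, m_U), g(m_1, v_1, m_U))$, which is exactly $\pi_2 f(\bfm, \bfv)$. This establishes the first identity. For the symmetric case $\bfv = (v, v)^T$, the equality $\pi_2 \bfv = \bfv$ is immediate, and substituting into the identity just proved yields $f(\pi_2 \bfm, \bfv) = \pi_2 f(\bfm, \bfv)$. Matching this against Definition \ref{def:equivariance} with $\Gamma = S_2$ acting on $X = \bbR^2$ (or the invariant simplex $\Delta^2$) via $\pi_2$ gives the claimed $S_2$-equivariance.

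There is no real obstacle here: the argument is a direct substitution that reduces to the permutation-invariance of $m_U$ and the index-uniformity of the right-hand side of \eqref{eq:miDynamics}. The only point worth being careful about is to treat $m_U$ explicitly as a function of $(m_1, m_2)$ (rather than as an independent coordinate), so that one does not inadvertently need to permute a third slot.
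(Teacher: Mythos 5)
Your proof is correct and follows essentially the same route as the paper: a direct substitution into \eqref{eq:miDynamics}, using the facts that $m_U = 1 - m_1 - m_2$ is symmetric in $(m_1,m_2)$ and that each component depends on the index only through $(m_i, v_i)$; your helper function $g$ just makes explicit what the paper verifies by writing out the swapped components. The second statement is handled identically in both, via $\pi_2\bfv = \bfv$.
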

\begin{proof}
The first statement is proven by straightforward substitution. From \eqref{eq:miDynamics}, we have
\[ f(\pi_2 \bfm, \pi_2 \bfv) = \begin{bmatrix}
v_2 (1-m_1-m_2) - m_2 \left( \frac{1}{v_2} - v_2 (1-m_1-m_2) + \sigma(m_1 m_2) \right) \\
v_1 (1-m_1-m_2) - m_1 \left( \frac{1}{v_1} - v_1 (1-m_1-m_2) + \sigma(m_1 m_2) \right)
\end{bmatrix}
= \pi_2 f(\bfm, \bfv).
\]
The second statement follows by noting that $\bfv = (v, v)^T$ implies $\pi_2 \bfv = \bfv$. Then, we have $f(\pi_2 \bfm, \bfv) = f(\pi_2 \bfm, \pi_2 \bfv) = \pi_2 f(\bfm, \bfv)$.
\end{proof}

Let $T$ be a parsing of $n_o$ options. Recall from Proposition \ref{prop:symmetryGroup} that the set of isomorphisms associated with $T$ form a group denoted $\Gamma_T$. These isomorphisms are represented by permutations $\sigma$. The dynamics \eqref{eq:zDot} obey symmetries corresponding to the permutations associated with $\Gamma_T$. When the option values are all identical, the dynamics are $\Gamma_T$-equivariant.

\begin{lemma} \label{lem:symmetry} 
The dynamics \eqref{eq:mDotTree} defined by a tree $T$ are preserved under isomorphisms of $T$. Explicitly, let $T$ be a parsing of $n_o$ options, $f_m$ be the dynamics \eqref{eq:mDotTree}, and let $\gamma \in \Gamma_T$. Then, $f_m(\gamma \bfm, \gamma \bfv) = \gamma f_m(\bfm, \bfv)$.
\end{lemma}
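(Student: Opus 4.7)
The plan is to reduce the equivariance claim to a statement about the single flip generators and then apply Lemma \ref{lem:seeleySymmetry} blockwise. By Proposition \ref{prop:symmetryGroup}, the group $\Gamma_T$ is generated by flips $\gamma_j$ at internal nodes where the left and right subtrees rooted at the children of $j$ are isomorphic as rooted trees. Equivariance under a group $\Gamma$ is preserved under composition, so if $f_m$ commutes with each generating flip in the claimed sense, it commutes with any product of them. Hence it suffices to establish
\[ f_m(\gamma_j \bfm, \gamma_j \bfv) = \gamma_j f_m(\bfm, \bfv) \]
for a single arbitrary flip generator $\gamma_j$.

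Next I would unpack the action of $\gamma_j$ on the stacked depth-first vectors $\bfm$ and $\bfv$. The flip at node $j$ induces a node permutation $\sigma_j$ that fixes $j$ and every node outside the subtree rooted at $j$, and that bijects $\ldesc(j) \leftrightarrow \rdesc(j)$ in a way that realizes the chosen isomorphism between those two subtrees. The induced action on $\bfm$ is then the following: blocks $\bfm_k$ for $k \notin \desc(j) \cup \{j\}$ are left alone; at node $j$ itself the block is swapped as $\bfm_j \mapsto \pi_2 \bfm_j$; and for each internal node $k$ in the subtree rooted at $j$ the block $\bfm_k$ is relocated to position $\sigma_j(k)$, with an additional factor $\pi_2^{\epsilon_k}$, $\epsilon_k \in \{0,1\}$, applied if the local left/right orientation at node $\sigma_j(k)$ is reversed by the isomorphism. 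The action on $\bfv$ is identical in structure.

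The equivariance is then checked block by block. For blocks at nodes outside the subtree rooted at $j$ the two sides of the claimed identity are literally the same, since $\gamma_j$ acts as the identity there and $f_m$ is defined componentwise. For the block at node $j$, the left side reads $f(\pi_2 \bfm_j, \pi_2 \bfv_j)$ and the right side reads $\pi_2 f(\bfm_j, \bfv_j)$; these are equal by Lemma \ref{lem:seeleySymmetry}. For an internal descendant $k \neq j$ of $j$, the block on the left-hand side at position $\sigma_j(k)$ is $f(\pi_2^{\epsilon_k} \bfm_k, \pi_2^{\epsilon_k} \bfv_k)$, whereas the corresponding block on the right-hand side is $\pi_2^{\epsilon_k} f(\bfm_k, \bfv_k)$ obtained from relocating the original block. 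A second application of Lemma \ref{lem:seeleySymmetry} matches them. Since both the local swap at node $j$ and any deeper swaps along the subtree are instances of the $\pi_2$-symmetry of the Seeley dynamics, the claim follows.

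The main obstacle is purely combinatorial bookkeeping: writing down precisely how $\sigma_j$ acts on the depth-first-ordered stacked vector and locating the nodes where the extra $\pi_2$ swap is triggered. Once that permutation and the pattern of orientation reversals are pinned down, the mathematical content collapses to repeated invocation of Lemma \ref{lem:seeleySymmetry}, because $f_m$ is block-diagonal across internal nodes and each block is a copy of the Seeley dynamics. Extending from a single generator to an arbitrary $\gamma \in \Gamma_T$ is then immediate by induction on the length of a decomposition of $\gamma$ into flip generators.
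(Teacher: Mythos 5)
Your proposal is correct and follows essentially the same route as the paper: reduce to a single flip generator, observe that the flip acts on the depth-first-stacked state as a $\pi_2$ swap at the flipped node together with a block relocation of the descendant states, and invoke Lemma \ref{lem:seeleySymmetry} blockwise using the fact that $f_m$ is a componentwise stacking of copies of the Seeley dynamics. Your extra bookkeeping of possible orientation reversals $\pi_2^{\epsilon_k}$ at descendant nodes is slightly more general than the paper's treatment (which takes the descendant blocks to be relocated intact) but is handled by the same lemma and does not change the argument.
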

\begin{proof}
Let $\bfm$ represent the coordinates of \eqref{eq:mDotTree} that result from the default depth-first parsing of the tree $T$. Let $\gamma_i \in \Gamma_T$ represent the operation of flipping the tree $T$ at internal node $i$. Note that any $\gamma \in \Gamma_T$ can be represented as a composition of several flips $\gamma_i$, so it suffices to show that $f_m(\gamma_i \bfm, \gamma_i \bfv) = \gamma_i f_m(\bfm, \bfv)$ for any flip $\gamma_i$. 

Let $T_i^\prime$ be the tree that results from flipping $T$ at the internal node $i$, and let $\bfm^\prime$ represent the coordinates of \eqref{eq:mDotTree} that result from the depth-first traversal of the tree $T$. The dynamics \eqref{eq:mDotTree} take the form $\dot \bfm = f_m(\bfm, \bfv)$ in the coordinates associated with tree $T$ and $\dot \bfm^\prime = f_m(\bfm^\prime, \bfv^\prime)$ in the coordinates associated with $T^\prime$. Note that $\bfv^\prime$ represents $\bfv$ in the coordinates associated with $T^\prime$.

The action of $\gamma_i$ permutes the descendants of node $i$, and in particular swaps the right and left children of $i$: $\gamma_i : (m_{i1}, m_{i2}) \mapsto (m_{i2}, m_{i1})$. Compactly, this can be written as $\pi_2(m_{i1}, m_{i2}) = (m_{i2}, m_{i1}),$ where $\pi_2 \in S_2$ represents the permutation of two elements. The relation between $\bfm$ and $\bfm^\prime$ is as follows
\begin{align*} \bfm &= (\bfm_0^T, \ldots, \bfm_i^T, \underbrace{\bfm_{\lchild(i)}^T, \bfm_{\lchild(\lchild(i))}^T, \ldots}_{\text{left descendants of node } i}, \underbrace{\bfm_{\rchild(i)}^T, \bfm_{\lchild(\rchild(i))}^T, \ldots}_{\text{right descendants of node }i}, \ldots)^T \\
\bfm^\prime &= (\bfm_0^T, \ldots, \pi_2 \bfm_i^T, \underbrace{\bfm_{\rchild(i)}^T, \bfm_{\lchild(\rchild(i))}^T, \ldots}_{\text{right descendants of node } i}, \underbrace{\bfm_{\lchild(i)}^T, \bfm_{\lchild(\lchild(i))}^T, \ldots}_{\text{left descendants of node }i}, \ldots)^T,
\end{align*}
where $\lchild$ and $\rchild$ are the child relationships associated with tree $T$.

Recall that $\bfm_i = (m_{i1}, m_{i2})$ obeys the dynamics $\dot \bfm_i = f(\bfm_i,  \bfv_i)$ given by \eqref{eq:miDynamics}. By Lemma \ref{lem:seeleySymmetry}, we have $f(\pi_2 \bfm_i, \pi_2 \bfv_i) = \pi_2 f(\bfm_i, \bfv_i)$, so the action of $\gamma_i$ leaves the dynamics of $\bfm_i$ equivariant. It remains to study the action of $\gamma_i$ on the descendants of node $i$. As seen above, the action of $\gamma_i$ on these descendants is a block permutation, mapping $(\bfm_{\lchild(i)}, \bfm_{\rchild(i)}) \mapsto (\bfm_{\rchild(i)}, \bfm_{\lchild(i)})$, etc. The dynamics of each block $j$ is given by $\dot \bfm_j = f(\bfm_j, \bfv_j)$ and the overall dynamics $f_m$ is a simple stacking of copies of $f$. Since the action of $\gamma_i$ permutes both the blocks of $\bfm$ and $\bfv$ in the same way, the dynamics $f_m(\bfm^\prime, \bfv^\prime)$ consists of a permutation of the blocks of $f_m(\bfm, \bfv)$. Thus, we have
\[ f_m(\bfm^\prime, \bfv^\prime) = f_m(\gamma_i \bfm, \gamma_i \bfv) = \gamma_i f_m(\bfm, \bfv) \]
for any flip $\gamma_i$. The result follows by recalling that a generic $\gamma \in \Gamma_T$ can be represented by the composition of several flips $\gamma_i$.
\end{proof}

\begin{theorem} \label{thm:equivariance}
Let the conditions for Lemma \ref{lem:symmetry} be satisfied and suppose that $\bfv = v \mathbf{1}_{n_o}$, where $\mathbf{1}_{n_o} \in \bbR^{n_o}$ is the vector with all entries equal to 1, i.e., when $v_i = v \forall i \in \oneTo{n_o}$. Then, the dynamics $f_z$ defined by \eqref{eq:zDot} are equivariant under permutations of $\bfz$ corresponding to isomorphisms of $T$.
\end{theorem}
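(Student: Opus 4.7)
The plan is to reduce the claim to the equivariance of $f_m$ already established in Lemma \ref{lem:symmetry}, and then transport it through the bijective change of coordinates $\bfm \mapsto \bfz$ built into the construction of Section \ref{sec:vectorField}.

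First I would observe that the hypothesis $\bfv = v\mathbf{1}_{n_o}$ propagates up the tree: the recursive definition \eqref{eq:internalV} gives $u_i = v$ at \emph{every} node $i$, so each $\bfv_j$ attached to an internal node has both components equal to $v$, and hence the stacked vector $\bfv$ is fixed by every $\gamma \in \Gamma_T$. Therefore Lemma \ref{lem:symmetry} specializes to
\[ f_m(\gamma \bfm, \bfv) \;=\; f_m(\gamma \bfm, \gamma \bfv) \;=\; \gamma\, f_m(\bfm, \bfv). \]

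Next I would define the change of coordinates $\phi : \bfm \mapsto \bfz$ by unrolling the recursion $z_0 = 1$, $z_{\lchild(j)} = z_j m_{j1}$, $z_{\rchild(j)} = z_j m_{j2}$, so that each component of $\phi(\bfm)$ is a product of particular entries of the $\bfm_j$ taken along the unique root-to-node path in $T$. The core combinatorial step is to check that $\phi$ is $\Gamma_T$-equivariant, i.e.\ $\phi(\gamma \bfm) = \gamma\, \phi(\bfm)$ for every $\gamma \in \Gamma_T$. Since $\Gamma_T$ is generated by flips $\gamma_k$ at internal nodes (Proposition \ref{prop:symmetryGroup}), it suffices to verify this for a single flip. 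A flip at node $k$ exchanges the two subtrees rooted at $k$'s children together with swapping the components of $\bfm_k$, so the product defining $z_\ell$ computed from $\gamma_k \bfm$ coincides with the product defining $z_{\sigma_k^{-1}(\ell)}$ computed from $\bfm$, where $\sigma_k$ is the node permutation induced by the flip; this is exactly the coordinate-wise statement of $\phi(\gamma_k \bfm) = \gamma_k \phi(\bfm)$.

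Finally I would combine the two equivariances using the chain rule. On the open set where every $z_i \neq 0$, the map $\phi$ is a local diffeomorphism and, by construction, $\dot\bfz = d\phi(\bfm)\, \dot\bfm$, so $f_z(\phi(\bfm), \bfv) = d\phi(\bfm)\, f_m(\bfm, \bfv)$. Differentiating the identity $\phi \circ \gamma = \gamma \circ \phi$ (with $\gamma$ acting as a permutation matrix) gives $d\phi(\gamma \bfm)\, \gamma = \gamma\, d\phi(\bfm)$. Hence
\[ f_z(\gamma \bfz, \bfv) \;=\; d\phi(\gamma \bfm)\, f_m(\gamma \bfm, \bfv) \;=\; \gamma\, d\phi(\bfm)\, f_m(\bfm, \bfv) \;=\; \gamma\, f_z(\bfz, \bfv), \]
and the identity extends by continuity to the closure. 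I expect the main obstacle to be the second step: carefully showing that a flip at an arbitrary internal node, applied to the recursive path-product definition of $\phi$, produces precisely the permutation of $\bfz$-components demanded by the tree isomorphism. This is essentially bookkeeping with root-to-node paths, but must be done with enough care to cover flips deep inside the tree, not merely at the root; once it is in place, the rest of the argument is a direct invocation of the chain rule and of Lemma \ref{lem:symmetry}.
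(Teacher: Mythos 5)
Your proposal is correct and follows essentially the same route as the paper: establish equivariance of the coordinate change $\bfm \mapsto \bfz$, invoke Lemma \ref{lem:symmetry}, and transport equivariance through the chain rule, using $\gamma \bfv = \bfv$ when all option values are equal. You actually supply more detail than the paper on the step it dismisses as ``clear'' (the flip-by-flip verification that the path-product definition of $\bfz$ intertwines with the node permutation) and on the restriction to the set where $z_i \neq 0$, but the underlying argument is the same.
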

\begin{proof}
Let $\dot \bfz = f_z(\bfz, \bfv)$ be the dynamics \eqref{eq:zDot}. Let $\dot \bfm = f_m(\bfm, \bfv)$ be the dynamics \eqref{eq:mDotTree}. Note that the vector fields $f_z$ and $f_m$ are related by a change of coordinates $\bfz = g(\bfm)$ that is invertible away from the origin $\bfm = 0$. It is clear that $g(\gamma \bfm) = \gamma g(\bfm) \forall \gamma \in \Gamma_T$. Then, elementary calculus yields 
\beq 
f_z(\bfz, \bfv) = \dot \bfz = \frac{d}{dt} g(\bfm) = \frac{\partial g}{\partial \bfm} \dot \bfm = \frac{\partial g}{\partial \bfm} f_m(\bfm, \bfv) = \frac{\partial g}{\partial \bfm}(g^{-1}(\bfz)) f_m(g^{-1}(\bfz), \bfv).
\eeq

Analogously, we have
$f_z(\gamma \bfz, \gamma \bfv) = \frac{\partial g}{\partial \bfm}(g^{-1}(\gamma \bfz)) f_m(g^{-1}(\gamma \bfz), \gamma \bfv)$. The fact that $g(\gamma \bfm) = \gamma g(\bfm) = \gamma \bfz$ implies that $\gamma g^{-1}(\bfz) = \gamma \bfm = g^{-1}(\gamma \bfz)$. The chain rule yields $\frac{\partial g(\gamma \bfm)}{\partial \bfm} = \frac{\partial g(\gamma \bfm)}{\partial \bfm} \gamma$. Similarly, $g(\gamma \bfm) = \gamma g(\bfm)$ implies that $\frac{\partial g(\gamma \bfm)}{\partial \bfm} = \gamma \frac{\partial g(\bfm)}{\partial \bfm}$. Finally, note that $\gamma \cdot \gamma$ is equal to the identity for any $\gamma \in \Gamma_T$. Putting these facts together yields
\begin{align*}
f_z(\gamma \bfz, \gamma \bfv) &= \frac{\partial g}{\partial \bfm}(g^{-1}(\gamma \bfz)) f_m(g^{-1}(\gamma \bfz), \gamma \bfv)\\ 
&= \gamma \frac{\partial g^{-1}(\bfz)}{\partial \bfm} \gamma f_m(\gamma g^{-1}(\bfz), \gamma \bfv) \\
&= \gamma \frac{\partial g^{-1}(\bfz)}{\partial \bfm} \gamma \cdot \gamma f_m(g^{-1}(\bfz),  \bfv) = \gamma f_z(\bfz, \bfv).
\end{align*}

Thus, the dynamics $f_z$ obey the same tree isomorphism symmetry as the dynamics $f_m$. When $\bfv = v \mathbf{1}$, $\gamma \bfv = \bfv \forall \gamma \in \Gamma_T$. Then $f_z(\gamma \bfz, \bfv) = g(\gamma \bfz, \gamma \bfv) = \gamma g(\bfz, \bfv)$, the desired result.
\end{proof}

The implication of Lemma \ref{lem:symmetry} and Theorem \ref{thm:equivariance} is that the fundamental structure of the dynamics \eqref{eq:mDotTree} and \eqref{eq:zDot} is encoded in structure of the parsing $T$. Furthermore, when all the options have equal values, they are treated the same in the sense that by the dynamics of the corresponding states are unchanged by permutation of the coordinates. When the option values differ, however, these symmetries can be broken. The symmetry breaking can be understood by studying the bifurcation properties of the vector field.

\section{Bifurcation properties of the equivariant vector field} \label{sec:bifurcation}
The Seeley \etal~dynamics \eqref{eq:miDynamics} decide between two options using a pitchfork bifurcation that unfolds as the values of the two options differ. The dynamics \eqref{eq:mDotTree} and \eqref{eq:zDot} introduced in Section \ref{sec:vectorField} embed multiple copies of the pitchfork bifurcation inherited from \eqref{eq:miDynamics}. In this section we make this statement precise. We begin by recalling the definition of a $k$-parameter unfolding of a bifurcation.
\begin{definition}[\cite{MG-DGS:85}] \label{def:unfolding}
Let $f(x, \lambda) = 0$ be an equation which undergoes a bifurcation as $\lambda \in \bbR$ is varied. An \emph{unfolding} of $f$ is a parametrized family of functions $F(x, \lambda, \alpha), \alpha \in \bbR^k$, such that $F(x, \lambda, 0) = f(x, \lambda)$. One refers to $F$ as a $k$-parameter unfolding of $f$.
\end{definition}

We now recall the formal bifurcation result concerning the Seeley \etal~dynamics \eqref{eq:miDynamics}.

\begin{theorem}{\cite{TDS-etal:12, DP-etal:13}} \label{thm:SeeleyPitchfork}
Let $\dot \bfm = f(\bfm, \bfv)$ be the dynamics \eqref{eq:miDynamics} and let $\bfv = v \mathbf{1} \in \bbR_+^2$ be the vector with both entries equal to $v>1$. The dynamics undergo a pitchfork bifurcation as the parameter $\sigma$ increases through a critical value given by
\beq \label{eq:sigmaStar}
\sigma = \frac{4v^3}{(v^2-1)^2}.
\eeq
Equivalently, for fixed $\sigma$, the dynamics undergo a pitchfork bifurcation as the parameter $v$ increases through the critical value $v=v^*$ solving \eqref{eq:sigmaStar}.
\end{theorem}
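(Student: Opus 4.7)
The plan is to exploit the $S_2$-equivariance of the Seeley \etal~dynamics (Lemma \ref{lem:seeleySymmetry}) and the mean-difference coordinates already introduced in equations \eqref{eq:DeltamDot}--\eqref{eq:mBarDot}. In the symmetric case $\bfv = (v,v)^T$ we have $\dv = 0$ and $\vb = v$, so the dynamics simplify considerably. Because $f$ is $S_2$-equivariant and $\dm \mapsto -\dm$ generates this action, $f_{\dm}(\dm, \mb; v; 0)$ is odd in $\dm$ and $f_{\mb}(\dm, \mb; v, \sigma; 0)$ is even in $\dm$. In particular the invariant subspace $\{\dm = 0\}$ is flow-invariant and contains the deadlock branch of equilibria.

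First I would locate the deadlock equilibrium. Setting $\dm = 0$ in \eqref{eq:mBarDot} and taking $K$ finite (or equivalently working directly with \eqref{eq:miDynamics} at $m_1 = m_2 = \mb$) produces a scalar equation in $\mb$ whose solution $\mb = \mb^\ast(v, \sigma)$ parameterizes the symmetric equilibrium branch. Next I would linearize the full two-dimensional vector field $(f_{\dm}, f_{\mb})$ at $(0, \mb^\ast)$. The $S_2$-symmetry guarantees that the Jacobian block-diagonalizes into a scalar eigenvalue $\lambda_{\dm}(v,\sigma) = \partial f_{\dm}/\partial \dm|_{(0,\mb^\ast)}$ governing the antisymmetric mode and an eigenvalue $\lambda_{\mb}(v,\sigma)$ governing the symmetric (deadlock) mode; no mixing terms occur because odd and even parity decouple at a symmetric equilibrium.

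The bifurcation condition is $\lambda_{\dm}(v,\sigma) = 0$. Substituting $\mb^\ast(v,\sigma)$ into the expression for $\partial f_{\dm}/\partial \dm$ and solving algebraically should yield exactly the relation
\[
\sigma = \frac{4v^3}{(v^2-1)^2},
\]
valid for $v > 1$ (so that the right-hand side is positive and finite). Transversality $\partial \lambda_{\dm}/\partial \sigma \neq 0$ along this curve would then be checked by direct differentiation, establishing that $\sigma$ (or equivalently $v$, by the implicit function theorem applied to \eqref{eq:sigmaStar}) is a legitimate bifurcation parameter.

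Finally, to identify the bifurcation as a pitchfork rather than a generic zero-eigenvalue degeneracy, I would appeal to the equivariant branching framework of \cite{MG-DGS:85,MG-IS-DGS:88}: odd parity of $f_{\dm}$ in $\dm$ automatically kills the quadratic term in the reduced one-dimensional bifurcation equation obtained by Lyapunov--Schmidt (or center manifold) reduction along the antisymmetric eigendirection, leaving a normal form $\dot x = a(\sigma - \sigma^\ast)x + b x^3 + \cdots$ with $a \neq 0$ by transversality. The main obstacle is the routine but unpleasant algebra of computing the cubic coefficient $b$ from \eqref{eq:miDynamics} and verifying $b < 0$ so that the bifurcation is supercritical (matching the diagrams in Figure \ref{fig:pitchfork}\textbf{(a)} and \cite{TDS-etal:12,DP-etal:13}); the rest is bookkeeping forced by $S_2$-equivariance.
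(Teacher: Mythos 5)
Your proposal is correct and follows essentially the same route as the paper: the paper locates the symmetric equilibrium $\bfm=\mbar\mathbf{1}$, evaluates the Jacobian \eqref{eq:mDotJacobian} there, and finds where the eigenvalue along the antisymmetric direction $(1,-1)$ crosses zero — which is exactly your $\lambda_{\dm}$ computed in mean-difference coordinates, since those coordinates just diagonalize that symmetric Jacobian. Your additional steps (explicit transversality check and computation of the cubic coefficient to certify the pitchfork normal form) go beyond what the paper writes down, as it delegates the pitchfork identification to the cited references \cite{TDS-etal:12,DP-etal:13}, but they are consistent with and strengthen the argument.
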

\begin{proof}
When $\bfv = v \mathbf{1}$, straightforward computation shows that the dynamics $f$ defined by \eqref{eq:miDynamics} have an equilibrium $\bfm = \mbar \mathbf{1}$, where $\mbar$ satisfies
\beq \label{eq:mBarDeadlock}
\mbar = \frac{-(1 + v^2) + \sqrt{1+ 2 v^2 + 4 \sigma v^3 + 9 v^4}}{2 v (2 v + \sigma)}.
\eeq

Evaluating the Jacobian of $f$ yields 
\begin{align} \label{eq:mDotJacobian}
J 
&= \left. \begin{bmatrix}
-\frac{1}{v_1} - v_1 (1+m_1) + v_1 m_U - \sigma m_2 & -v_1(1+m_1)- \sigma m_1 \\ 
-v_2(1+m_2) & -\frac{1}{v_2} - v_2 (1+m_2) + v_2 m_U - \sigma m_1
\end{bmatrix}\right|_{(\bfm, \bfv) = (\mbar \mathbf{1}, v\mathbf{1})}\\
&= \begin{bmatrix}
-\frac{1}{v} - 3 v \mbar - \sigma \mbar & -v(1+\mbar) - \sigma \mbar \\ 
-v(1+\mbar) - \sigma \mbar & -\frac{1}{v} - 3 v \mbar - \sigma \mbar
\end{bmatrix}. \nonumber
\end{align}
The eigenvalues of $J$ are $\lambda_1 = \frac{-2 \mbar v^2+v^2-1}{v}, \lambda_2 = \frac{-4 \mbar v^2-2 \mbar \sigma  v-v^2-1}{v}$. Consider $\lambda_1$ and $\lambda_2$ as functions of $\sigma$. Simple substitution shows that $\lambda_2 < 0 \forall \sigma > 0$ and that $\lambda_1$ smoothly increases through zero as $\sigma$ increases through the value $\sigma^*$ defined by  \eqref{eq:sigmaStar}.
\end{proof}

As shown in Corollary \ref{cor:singularUnfolding}, the pitchfork bifurcation embedded in the $S_2$-equivariant dynamics \eqref{eq:miDynamics} unfolds as a function of a single parameter $\alpha = 2(v_1-v_2)/(v_1+v_2)$. Note that when $v_1 = v_2, \alpha = 0$. The bifurcation and unfolding properties of \eqref{eq:miDynamics} carry over naturally to the dynamics \eqref{eq:mDotTree}.

Note that the dynamics $f_m$ defined by \eqref{eq:mDotTree} consist of stacked copies of \eqref{eq:miDynamics}, so the Jacobian of $f_m$ is a block diagonal matrix whose diagonal blocks are copies of $J$ defined in \eqref{eq:mDotJacobian}. The singularity of $f_m$ then unfolds as a function of $n_i = n_o-1$ parameters $\alpha_i = 2(v_{i1}-v_{i2})/(v_{i1}+v_{i2})$. Formally, we have the following theorem.
\begin{theorem} \label{thm:treeJacobian}
Let $T$ be a parsing of $n_o$ options consisting of $n_i$ internal nodes. Let $\Gamma_T$ be the  isomorphism group of $T$. Let $\dot \bfm = f_m(\bfm, \bfv)$ be the dynamics \eqref{eq:mDotTree} defined by $T$ and let $v_i = v > 1$ for each option $i \in \oneTo{n_o}$ Then,
\renewcommand{\theenumi}{\roman{enumi})}
\begin{enumerate}
\item The vector field $f_m$ has an equilibrium $\bfm = \mbar \mathbf{1}_{2n_i}$, where $\mbar$ is defined by \eqref{eq:mBarDeadlock}.
\item The vector field has a singularity at $(\bfm, v, \sigma) = (\mbar \mathbf{1}_{2n_i}, v, \sigma)$, where $\sigma$ and $v$ are related by \eqref{eq:sigmaStar}. The singularity is a $\Gamma_T$-equivariant bifurcation that consists of $n_i$ copies of the standard $S_2$ pitchfork bifurcation.
\item When $\bfv$ is perturbed away from $\bfv = v \mathbf{1}$ the system $f_m(\bfm, \bfv)$ is a $n_o-1$-parameter unfolding of the $\Gamma_T$-equivariant bifurcation.
\end{enumerate}
\end{theorem}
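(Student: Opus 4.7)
The plan is to exploit the block-decoupled structure of the stacked dynamics $f_m$ defined in \eqref{eq:mDotTree}. By construction, $f_m$ is a concatenation of one copy of the Seeley \etal~vector field $f$ for each internal node $i$ of $T$, and each block depends only on its own coordinates $\bfm_i$ and the local input $\bfv_i$. Consequently the Jacobian $D_{\bfm} f_m$ is block diagonal with $n_i$ blocks, each equal to the matrix $J$ from \eqref{eq:mDotJacobian} evaluated at the appropriate $(\bfm_i, \bfv_i)$. All three claims will follow from this decoupling together with Theorem \ref{thm:SeeleyPitchfork}, Corollary \ref{cor:singularUnfolding}, and Lemma \ref{lem:symmetry}.

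For claim (i), I would first observe that when $v_j = v$ at every leaf, the recursive definition \eqref{eq:internalV} propagates the common value up the tree, so $u_j = v$ at every node and hence $\bfv_i = v\mathbf{1}$ at every internal node. Applying the equilibrium computation used in the proof of Theorem \ref{thm:SeeleyPitchfork} to each block, $\bfm_i = \mbar \mathbf{1}$ with $\mbar$ given by \eqref{eq:mBarDeadlock} solves $f(\bfm_i, v \mathbf{1}) = 0$ for each $i$. Stacking these $n_i$ block equilibria in the depth-first ordering that defines $\bfm$ yields the claimed equilibrium $\bfm = \mbar \mathbf{1}_{2 n_i}$.

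For claim (ii), I would evaluate $D_{\bfm} f_m$ at this equilibrium: it is block diagonal with all $n_i$ diagonal blocks equal to the critical $J$ of \eqref{eq:mDotJacobian}. By Theorem \ref{thm:SeeleyPitchfork}, each such block has a simple zero eigenvalue precisely when $\sigma$ and $v$ satisfy \eqref{eq:sigmaStar}, with a one-dimensional center direction carrying the pitchfork. Hence at criticality $D_{\bfm} f_m$ has an $n_i$-dimensional kernel spanned by vectors supported in disjoint coordinate blocks, which is the structural signature of $n_i$ independent copies of the standard $S_2$ pitchfork singularity. The $\Gamma_T$-equivariance of this singularity is exactly the content of Lemma \ref{lem:symmetry} at the critical parameter values, so the bifurcation is $\Gamma_T$-equivariant as claimed.

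For claim (iii), I would identify the unfolding parameters with the relative value imbalances $\alpha_i = 2(v_{i1} - v_{i2})/(v_{i1} + v_{i2})$ at each internal node, noting that a proper binary tree on $n_o$ leaves has $n_i = n_o - 1$ internal nodes. Within each block, Corollary \ref{cor:singularUnfolding} shows that perturbing $\bfv_i$ away from $v\mathbf{1}$ produces exactly a one-parameter unfolding of the standard $S_2$ pitchfork parameterized by $\alpha_i$, and the decoupled block structure is preserved under perturbation of $\bfv$, so the full family $f_m(\bfm, \bfv)$ is an $n_i$-parameter perturbation in the sense of Definition \ref{def:unfolding} with $\alpha = 0$ recovering the symmetric case. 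The subtle step is checking that the $n_i$ scalars $\alpha_i$, viewed via \eqref{eq:internalV} as rational functions of the leaf values $(v_1, \ldots, v_{n_o})$, are independent parameters at the symmetric point. I would handle this by ordering the internal nodes from leaves to root and observing that an infinitesimal perturbation of a single leaf value immediately changes $\alpha_i$ only at the unique node $i$ whose corresponding leaf pair contains that leaf (modulo smaller effects at ancestors), which gives a triangular dependence of $(\alpha_1, \ldots, \alpha_{n_i})$ on $n_i$ independent leaf-perturbation directions and yields the full $n_o - 1$-parameter unfolding.
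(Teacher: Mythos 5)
Your proposal is correct and follows essentially the same route as the paper's proof: exploit the block-decoupled structure of $f_m$ to stack the per-node equilibrium from Theorem \ref{thm:SeeleyPitchfork}, read off the block-diagonal Jacobian built from copies of $J$ in \eqref{eq:mDotJacobian}, and identify the $n_i = n_o-1$ unfolding parameters $\alpha_i$ attached to the internal nodes. You additionally make explicit two points the paper leaves implicit --- that \eqref{eq:internalV} propagates the common leaf value $v$ up the tree so that $\bfv_i = v\mathbf{1}$ at every internal node, and that the $\alpha_i$ are independent coordinates on the space of value perturbations near the symmetric point --- both of which are refinements of, not departures from, the paper's argument.
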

\begin{proof}
Recall from \eqref{eq:mDotTree} that $f_m(\bfm, \bfv)$ consists of stacked copies of the dynamics $f(\bfm_i, \bfv_i)$ defined by \eqref{eq:miDynamics}. Since $v_i = v \forall i \in \oneTo{n_o}$, $\bfv_i = v \mathbf{1}_2$ and $\bfv = v \mathbf{1}_{2n_i}$. Thus, the $i^{th}$ block of $f_m(\bfm, \bfv)$ is equal to $f(\bfm_i, \bfv_i)$, which has an equilibrium $\bfm_i = \mbar \mathbf{1}_2$ as seen in the proof of Theorem \ref{thm:SeeleyPitchfork}. The equilibrium of $f$ follows by stacking the blocks $\bfm_i$, which proves statement i).

For statement ii), note that since the elements of $\bfm$ are stacked in the same order as those of $f_m(\bfm, \bfv)$, the Jacobian of $f_m(\bfm, \bfv)$ is a block diagonal matrix with the $i^{th}$ diagonal block being the Jacobian of $f(\bfm_i, \bfv_i)$. Thus, evaluating the Jacobian of $f_m(\bfm, \bfv)$ at the equilibrium in statement i) yields a block diagonal matrix
\[ J_m = \left.\frac{\partial f_m(\bfm, \bfv)}{\partial \bfm} \right|_{(\bfm, \bfv) = (\mbar \mathbf{1}_{2n_i}, v \mathbf{1}_{2n_i})} = 
\begin{bmatrix}
J & 0 & \ldots & 0 \\ 
0 & J & \ldots & 0 \\
\vdots & \vdots & \ddots & \vdots \\
0 & 0 & \ldots & J
\end{bmatrix} \in \bbR^{2 n_i \times 2 n_i}, \]
where $J$ is the matrix defined in \eqref{eq:mDotJacobian}. The eigenvalues of $J_m$ are $\lambda_1  = \frac{-2 \mbar v^2+v^2-1}{v}$ and $\lambda_2 = \frac{-4 \mbar v^2-2 \mbar \sigma  v-v^2-1}{v}$, each with multiplicity $n_i$. As shown in the proof of Theorem \ref{thm:SeeleyPitchfork}, $\lambda_1 = 0$ when $\sigma$ and $v$ are related by \eqref{eq:sigmaStar}, so there is a singularity at $(\bfm, v, \sigma) = (\mbar \mathbf{1}_{2n_i}, v, \sigma)$. This singularity consists of $n_i$ copies of the $S_2$ pitchfork bifurcation embedded in $J$.

For statement iii), consider how the option values $v_i, i \in \oneTo{n_o}$ are related to the value vector $\bfv \in \bbR^{2n_i}$. The vector $\bfv$ consists of $n_i$ blocks $\bfv_i$ whose components are defined by \eqref{eq:internalV}, one for each internal node. Note that, since $T$ is a full binary tree, it is a well-established fact \cite{MH:14} that $n_i = n_0 - 1$. Each $\bfv_i, i \in \oneTo{n_i}$ corresponds to an unfolding parameter $\alpha_i = 2(v_{i1}-v_{i2})/(v_{i1}+v_{i2})$. Each $\alpha_i$ is an unfolding parameter, since $\bfv = v \mathbf{1}$ implies that $\alpha_i$. The result follows by noting that $n_i = n_o-1$.
\end{proof}

The implication of this result is that the dynamics \eqref{eq:mDotTree} embeds a bifurcation which consists of multiple copies of the standard pitchfork bifurcation \eqref{eq:pitchfork}. The structure of the equilibria of \eqref{eq:mDotTree} in the post-bifurcation regime reflects the symmetry properties of the vector field and can be studied in detail by appeal to the equivariant branching lemma \cite[Theorem 3.3]{MG-IS-DGS:88}. The detailed analysis is beyond the scope of this paper, but we show numerical results in Section \ref{sec:numerics-bifurcation} below.




\section{Model reduction via singular perturbation} \label{sec:singularPerturbation}
The dynamics $f_m$ defined in \eqref{eq:mDotTree} inherit a complicated rational form from the Seeley~\etal~dynamics \eqref{eq:miDynamics}. As shown in Theorem \ref{thm:slowDynamics}, the dynamics \eqref{eq:miDynamics} can be reduced by singular perturbation. In this section, we carry out an analogous model reduction for \eqref{eq:mDotTree} and show that the equilibria of the resulting reduced model can be readily understood.

\subsection{Change of coordinates}
As in \cite{PBR:19d}, we apply singular perturbation theory to the dynamics \eqref{eq:mDotTree} by mapping $\bfv \mapsto K \bfv$ for a constant gain $K>0$ and take the singular limit $K \to +\infty$, or equivalently $\epsilon = 1/K \to 0$. The singular perturbation allows us to eliminate half of the state variables and thus to express equilibria in a straightforward way as a function of the option values $v_i$. The singular perturbation is more readily analyzed by expressing $\bfm_i = (m_{i1}, m_{i2}) \in \Delta^2$ and $\bfv_i = (v_{i1}, v_{i2}) \in \bbR^2_+$ in terms of mean-difference coordinates defined by
\[ \dm_i = m_{i1} - m_{i2}, \ \mb_i = \frac{m_{i1} + m_{i2}}{2}, \text{and }  \dv_i = v_{i1} - v_{i2}, \ \vb_i = \frac{v_{i1} + v_{i2}}{2}, \]
respectively. Expressing $\bfm_i$ in mean-difference coordinates results in expressing $\bfz_i \in \bbR^2$ in corresponding mean-difference coordinates
\[ (z_{i1}, z_{i2}) = \bfz_i = z_i \bfm_i = (z_i m_{i1}, z_i m_{i2})^T = \left( z_i \frac{2 \mb_i + \dm_i}{2}, z_i \frac{2 \mb_i - \dm_i}{2} \right)^T. \]

Note that the recursive definition of the $z_i$ coordinates as $\bfz_i = z_i \bfm_i$ is such that the value of $z_i$ can be expressed as the product of $\bfm_i$ along the path $p_i$ from the root to node $i$. Recall that we define $p_i$ as the sequence of nodes traversed along the unique shortest path from the root to node $i$. The sequence $p_i$ begins with the root node and ends with node $i$. We denote the number of nodes in the sequence by $|p_i|$, and the $j^{th}$ node of $p_i$ by $p_{ij}$. Explicitly, we have
\beq \label{eq:ziProduct}
z_i = \prod_{j =1}^{|p_i|-1} \frac{2 \mb_{p_{ij}} + a_j \dm_{p_{ij}}}{2},  \text{ where }  a_j = \begin{cases}
+1,& p_{i(j+1)} = \lchild(p_{ij}) \\
-1, & p_{i(j+1)} = \rchild(p_{ij}).
\end{cases}
\end{equation}

The dynamics of $z_i$ follow from the dynamics \eqref{eq:mDotTree} and can be reduced by applying singular perturbation theory. As in \cite{PBR-DEK:18, PBR:19d}, we map $\bfv$ to $K \bfv$, where $K>0$ is a constant gain. To apply singular perturbation theory, we set $\epsilon = 1/K$ to be a small parameter and define coordinates $x, y$ with components
\[ x_i = \dm_i, \ y_i = \frac{1-2\mb_i}{\epsilon}. \]

Since the dynamics $f_m$ defined in \eqref{eq:mDotTree} is composed of stacked copies of the dynamics $f$ defined in \eqref{eq:miDynamics}, singular perturbation of \eqref{eq:mDotTree} can be carried out by singularly perturbing its components which consist of copies of $f$. We can express the dynamics in the coordinates $(\dm_i, \mb_i)$ using the dynamics \eqref{eq:DeltamDot}, \eqref{eq:mBarDot}
\begin{align}
\dot \dm_i &= f_{\dm}(\dm_i, \mb_i; K \vb_i; K \dv_i) \\
\dot \mb_i &= f_{\mb}(\dm_i, \mb_i, \sigma; K \dv_i).
\end{align}
In the singular perturbation coordinates $(x_i, y_i)$, these dynamics become
\begin{align}
\dot x_i &= f_x(x_i, y_i; \dv_i, \vb_i, \epsilon) \label{eq:xDot} \\
&= -\epsilon \left( \frac{1-\epsilon y_i + x_i}{2\vb_i + \dv_i} - \frac{1-\epsilon y_i - x_i}{2 \vb_i - \dv_i} \right) + \vb_i x_i y_i + \dv_i y_i \frac{3-\epsilon y_i}{2} \nonumber \\
\epsilon \dot y_i &= g_y(x_i, y_i; \dv_i, \vb_i, \epsilon) \label{eq:yDot} \\
&= \epsilon \left( \frac{1-\epsilon y_i + x_i}{2\vb_i + \dv_i} - \frac{1-\epsilon y_i - x_i}{2 \vb_i - \dv_i} \right) + \frac{\sigma}{2}((1-\epsilon y_i)^2-x_i^2) \nonumber \\
&\ \ \ \ - \frac{y_i}{2} \left( (2\vb_i + \dv_i) \left( 1 + \frac{1 - \epsilon y_i + x_i}{2} \right) \right) -\frac{y_i}{2} \left( (2 \vb_i - \dv_i) \left(1 + \frac{1 - \epsilon y_i - x_i}{2}\right) \right). \nonumber
\end{align}

\subsection{Reduced node dynamics}
Taking the singular limit of the dynamics \eqref{eq:xDot}, \eqref{eq:yDot} associated with node $i$ yields a reduced system whose dynamics are given by a rational polynomial. This is formalized in the following theorem, which is a straightforward application of \cite[Theorem 1]{PBR:19d} stated above as Theorem \ref{thm:slowDynamics} and whose proof is reproduced here.
\begin{theorem} \label{thm:reducedNodeDynamics}
In the singular limit $\epsilon \to 0$, the dynamics \eqref{eq:xDot}, \eqref{eq:yDot} reduce to 
\beq \label{eq:xiDotSlow}
\dot x_i = \frac{\sigma}{2 \vb_i}(1-x_i^2) \frac{2x_i + 3 \alpha_i}{6 + \alpha_i x_i},
\eeq
where $\alpha_i = \dv_i/\vb_i$.
\end{theorem}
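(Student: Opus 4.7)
The plan is to reduce the statement to a direct, node-by-node application of Theorem \ref{thm:slowDynamics}, by exploiting the block-decoupled structure of the full dynamics in $(\bfm,\bfv)$ coordinates. The key observation is that $f_m$ defined by \eqref{eq:mDotTree} is nothing more than a stacking of independent copies of the binary Seeley~\etal~dynamics \eqref{eq:miDynamics}, one copy per internal node, and the singular-perturbation change of variables $(x_i,y_i)=(\dm_i,(1-2\mb_i)/\epsilon)$ acts pointwise in the node index $i$. Consequently, the pair \eqref{eq:xDot}--\eqref{eq:yDot} governing $(x_i,y_i)$ at node $i$ is structurally identical to the single-node system whose singular reduction is recorded in Theorem \ref{thm:slowDynamics}, with the generic parameters $(\vb,\dv,\alpha)$ replaced by the node-local quantities $(\vb_i,\dv_i,\alpha_i)$.

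Given this identification, the body of the proof is to invoke Theorem \ref{thm:slowDynamics} at each internal node. Although $\vb_i$ and $\dv_i$ are determined recursively by the tree via \eqref{eq:internalV}, from the standpoint of the singular limit at node $i$ they are constant parameters (not dynamical variables), so the hypotheses of the cited theorem hold verbatim and its conclusion applied at node $i$ is exactly the claimed rational form \eqref{eq:xiDotSlow}.

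For a self-contained verification, I would execute the reduction explicitly: setting $\epsilon=0$ in $g_y$ annihilates the leading $\epsilon$-prefactor bracket, and expanding $(2\vb_i\pm\dv_i)(3\pm x_i)$ collapses the coefficient of $y_i$ to $\tfrac{1}{2}(6\vb_i+\dv_i x_i)$. Solving the resulting linear algebraic equation for the slow manifold gives $y_i=\sigma(1-x_i^2)/(6\vb_i+\dv_i x_i)$. Substituting this back into $f_x|_{\epsilon=0}=\tfrac{1}{2}y_i(2\vb_i x_i+3\dv_i)$ and rewriting everything in terms of $\alpha_i=\dv_i/\vb_i$ produces \eqref{eq:xiDotSlow}.

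There is no genuine obstacle in this proof. The single conceptual point is that the decoupling between nodes holds in $(\bfm,\bfv)$ coordinates; any cross-coupling between distinct nodes' motion enters only later through the multiplicative chain \eqref{eq:ziProduct} that defines the $\bfz$ coordinates, and is therefore irrelevant for the present statement. Tikhonov-type attractivity of the slow manifold is inherited directly from Theorem \ref{thm:slowDynamics}, and can be checked independently by noting that $\partial g_y/\partial y_i$ evaluated on the slow manifold at $\epsilon=0$ equals $-\tfrac{1}{2}(6\vb_i+\dv_i x_i)$, which is strictly negative on the admissible range $x_i\in[-1,1]$ with $\vb_i>0$, $|\dv_i|<2\vb_i$.
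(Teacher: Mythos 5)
Your proposal is correct and follows essentially the same route as the paper: the paper likewise treats the node dynamics as a decoupled copy of the reduced Seeley \emph{et al.}~system (citing Theorem \ref{thm:slowDynamics}) and then reproduces the explicit Tikhonov reduction—setting $\epsilon=0$, solving $g_y=0$ for the slow manifold $y_i=\sigma(1-x_i^2)/(6\vb_i+\dv_i x_i)$, and substituting into $f_x|_{\epsilon=0}$. Your explicit computation and the stability check of the slow manifold match the paper's argument step for step.
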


\begin{proof}
The proof follows the standard procedure for analyzing singularly-perturbed systems. First note that $x_i$ is the slow and $y_i$ the fast variable. Taking the singular limit $\epsilon \to 0$ of \eqref{eq:xDot} and \eqref{eq:yDot} yields
\begin{align}
\dot x_i &= f_x(x_i, y_i; \dv_i, \vb_i, 0) = \vb_i x_i y_i + \frac{3 \dv_i y_i}{2} \label{eq:xDotSingular} \\
0 &= g_y(x_i, y_i; \dv_i, \vb_i, 0) = -\frac{y_i}{2} \left( 6 \vb_i + \dv_i x_i \right) + \frac{\sigma}{2} \left( 1 - x_i^2 \right). \label{eq:yDotSingular}
\end{align}

Solving Equation \eqref{eq:yDotSingular} for the fast variable $y_i$ yields 
\[ y_i = h(x_i) := \frac{\sigma ( 1 - x_i^2 )}{6 \vb_i + \dv_i x_i} = \frac{\sigma}{\vb_i} \frac{1-x_i^2}{6+(\dv_i/\vb_i)x_i}, \]
which defines the slow manifold $\{ (x_i, y_i) = (x_i, h(x_i)) \}$. The system quickly converges to the slow manifold and then $x_i$ slowly evolves on the slow manifold. Using the expression $y_i = h(x_i)$ for the fast variable $y_i$ in terms of the slow variable $x_i$ yields the reduced slow dynamics 
\[ \dot x_i = f_x( x_i, h(x_i); \dv_i, \vb_i, 0) = \frac{\sigma}{2\vb_i}(1-x_i^2) \frac{2 x_i + 3 \dv_i/\vb_i}{6 + (\dv_i/\vb_i) x_i}. \]

Defining $\alpha_i = \dv_i/\vb_i$ yields the desired result \eqref{eq:xiDotSlow}.
\end{proof}

The implication of this result is that, in the singular limit $\epsilon \to 0$, $\mb_i \to 1/2$ and $\dm_i = x_i$ follows the dynamics \eqref{eq:xiDotSlow}. The coordinates of $\bfm_i$ associated with a node $i$ then reduce to 
\[ \bfm_i = \left( \frac{2 \mb_i + \dm_i}{2}, \frac{2 \mb_i - \dm_i}{2} \right)^T = \left( \frac{1+ \dm_i}{2} , \frac{1-\dm_i}{2} \right)^T, \]
where $\dm_i$ evolves according to \eqref{eq:xiDotSlow}. As shown in in Figure \ref{fig:equilibria}, the dynamics \eqref{eq:xiDotSlow} have equilibria $x_i = \pm 1, -3\alpha_i/2$ whose existence and stability properties depend on the unfolding parameter $\alpha_i$.

\subsection{Reduced tree dynamics}
The reduced dynamics of the full tree then follow from the reduction at each internal node $i$ introduced in Theorem \ref{thm:reducedNodeDynamics}. As noted above, the dynamics \eqref{eq:mDotTree} of the tree state $\bfm$ consists of stacked copies of the node dynamics $\dot \bfm_i = f_m(\bfm_i, \bfv_i)$. The reduced dynamics consist of stacked copies of the reduced dynamics \eqref{eq:xiDotSlow}. The equilibria of each state follow from the component dynamics. Formally, we have the following.

\begin{corollary} \label{cor:reducedTreeDynamics}
In the singular limit $\epsilon \to 0$, the dynamics \eqref{eq:mDotTree} reduce to 
\begin{equation} \label{eq:xDotSlowTree}
\dot \bfx = f_{\bfx}(\bfx, \bfv),
\end{equation}
where the $i^{th}$ component of $\bfx$ is equal to $x_i$, the states $x_i$ associated with each node $i$ are stacked in depth-first order, and $f_{\bfx}$ consists of stacked copies of the singularly-reduced dynamics \eqref{eq:xiDotSlow}.

The dynamics \eqref{eq:xDotSlowTree} have equilibrium states $\bfx_0$ whose $i^{th}$ component $x_{i,0}$ is equal to $\pm 1$ or $-3 \alpha_i/2$. The existence and stability properties of these equilibrium values depends on the unfolding parameter $\alpha_i$, as follows:
\[ x_{i,0} = \begin{cases}
+1, & \forall \alpha_i \in [-2,2], \text{ stable if } \alpha_i > -2/3 \\
-1, & \forall \alpha_i \in [-2,2], \text{ stable if } \alpha_i < 2/3 \\
-2\alpha_i/3, & \forall \alpha_i \in [-2/3, 2/3], \text{ unstable}.
\end{cases} \]
\end{corollary}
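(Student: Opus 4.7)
The strategy is to exploit the block-stacked structure of \eqref{eq:mDotTree} twice: once to reduce to the singular limit node-by-node, and once to compute equilibria and their stability coordinate-by-coordinate. Because the node dynamics $\dot\bfm_i = f(\bfm_i, \bfv_i)$ are coupled between nodes only through the algebraic relation $\bfz_i = z_i\bfm_i$ and not through the right-hand side of \eqref{eq:mDotTree}, the singular perturbation of Section \ref{sec:singularPerturbation} preserves this decoupling. Thus the coordinates $(x_i, y_i)$ from different nodes do not appear in each other's equations, and the fast Jacobian $\partial g_y/\partial y$ on the slow manifold is block diagonal with a single nonzero scalar per node.

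First I would formally state the reduction: since $f_m(\bfm, \bfv)$ is the stacked vector of $f(\bfm_i, \bfv_i)$ over internal nodes in depth-first order, rewriting each block in the $(x_i, y_i)$ coordinates produces the stacked version of \eqref{eq:xDot}--\eqref{eq:yDot}. Apply Theorem \ref{thm:reducedNodeDynamics} independently to each node $i$; because Tikhonov's theorem can be invoked per block (the slow manifold is the Cartesian product of the per-node slow manifolds $\{y_i = h_i(x_i)\}$, and each block satisfies the required hyperbolicity uniformly in the slow variable), the singular limit of the composite system is exactly the stacking \eqref{eq:xDotSlowTree} of the scalar reduced flows \eqref{eq:xiDotSlow}.

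Next I would identify the equilibria. Since \eqref{eq:xDotSlowTree} is componentwise decoupled, $\bfx_0$ is an equilibrium iff each $x_{i,0}$ is an equilibrium of \eqref{eq:xiDotSlow}. For $\alpha_i \in [-2,2]$ and $x_i \in [-1,1]$, the denominator $6 + \alpha_i x_i$ is bounded below by $4$, so the zero set of \eqref{eq:xiDotSlow} is exactly the zero set of $(1 - x_i^2)(2 x_i + 3\alpha_i)$, yielding $x_{i,0} \in \{+1,\, -1,\, -3\alpha_i/2\}$ with the intermediate root lying in $[-1,1]$ precisely when $|\alpha_i| \leq 2/3$.

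Finally I would read off stability from the block-diagonal (here, diagonal) Jacobian of $f_\bfx$ at $\bfx_0$: the equilibrium is stable iff $(\partial f_\bfx/\partial x_i)(x_{i,0}) < 0$ for every $i$, so we can classify per coordinate. A direct differentiation of \eqref{eq:xiDotSlow}, using that $(1-x_i^2)$ vanishes at $x_i = \pm 1$ and $(2x_i + 3\alpha_i)$ vanishes at $x_i = -3\alpha_i/2$, gives
\[
\frac{\partial f_\bfx}{\partial x_i}\bigg|_{x_i = 1} \;=\; -\frac{\sigma (2 + 3\alpha_i)}{\vb_i (6 + \alpha_i)}, \qquad \frac{\partial f_\bfx}{\partial x_i}\bigg|_{x_i = -1} \;=\; \frac{\sigma (-2 + 3\alpha_i)}{\vb_i (6 - \alpha_i)},
\]
and at $x_i = -3\alpha_i/2$ the derivative is proportional to $(1 - 9\alpha_i^2/4)$, which is nonnegative on the existence range. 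The signs of these expressions produce exactly the claimed thresholds $\alpha_i > -2/3$, $\alpha_i < 2/3$, and instability of the intermediate branch. The only real subtlety is justifying the block-wise Tikhonov reduction rigorously --- ensuring the slow-manifold regularity holds uniformly in $\bfx$ so that the component reductions compose --- but this is immediate from the product structure of the fast subsystem and the fact that Theorem \ref{thm:reducedNodeDynamics} already supplies the required hyperbolic slow manifold on each factor.
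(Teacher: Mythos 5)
Your proposal is correct and follows essentially the same route as the paper, which offers no formal proof of this corollary beyond observing that \eqref{eq:mDotTree} is a decoupled stack of node dynamics so that Theorem \ref{thm:reducedNodeDynamics} applies blockwise; your explicit per-block Tikhonov justification and the derivative computations at $x_i=\pm1$ and $x_i=-3\alpha_i/2$ are all consistent with the paper's Figure \ref{fig:equilibria} and surrounding discussion. Note also that your intermediate equilibrium $x_i=-3\alpha_i/2$ is the correct value (it is what makes the existence range $[-2/3,2/3]$ work out); the $-2\alpha_i/3$ appearing in the corollary's case display is a typo in the paper.
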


The equilibria of the reduced dynamics \eqref{eq:xDotSlowTree} imply a set of equilibria of the projected dynamics $f_o$ defined in \eqref{eq:projectedDynamics} whose values can be cleanly expressed in terms of the recursive definition \eqref{eq:ziProduct}. Formally, we have the following.
\begin{corollary} \label{cor:projectedSingularEquilibria}
Take the singular limit $\epsilon \to 0$ of the dynamics \eqref{eq:mDotTree} and consider the projection of these dynamics to the leaf states $\dot \bfm_o = f_o(\bfz, \bfv)$ given by \eqref{eq:projectedDynamics}. Denote the $i^{th}$ component of $\bfm_o$ as $m_{oi} = z_{o^{-1}(i)}$. The projected dynamics have equilibria with components
\begin{equation} \label{eq:projectedSingularEquilibria}
m_{oi} = z_{o^{-1}(i)} = \prod_{j =1}^{|p_i|-1} \frac{1 + a_j \dm_j^*}{2},
\end{equation}
where $a_j$ and $\dm_j^*$ are given by
\begin{align*}  a_j &= \begin{cases}
+1,& p_{o^{-1}(i)(j+1)} = \lchild(p_{o^{-1}(i)j}) \\
-1, & p_{o^{-1}(i)(j+1)} = \rchild(p_{o^{-1}(i)j})
\end{cases} \text{ and }\\
\dm_j^* &= \dm_{p_{o^{-1}(i)j}} = \begin{cases}
+1, & \forall \alpha_{p_{o^{-1}(i)j}} \in [-2,2], \\
-1, & \forall \alpha_{p_{o^{-1}(i)j}} \in [-2,2],\\ 
-2\alpha_{p_{o^{-1}(i)j}}/3, & \forall \alpha_{p_{o^{-1}(i)j}} \in [-2/3, 2/3].\\
\end{cases}
\end{align*}
Equilibria are stable if each $\dm_j^*$ is a stable equilibrium, where the stability of each $\dm_j^*$ is given in Corollary \ref{cor:reducedTreeDynamics}.
\end{corollary}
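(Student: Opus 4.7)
The plan is to chain together three ingredients already in place: the node-level equilibria from Corollary \ref{cor:reducedTreeDynamics}, the recursive product expression \eqref{eq:ziProduct} for $z_i$ along the root-to-$i$ path, and the projection \eqref{eq:projection} that identifies leaf entries of $\bfm_o$ with $z_{o^{-1}(i)}$.

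First I would fix an equilibrium of the stacked reduced dynamics \eqref{eq:xDotSlowTree}. Because $f_{\bfx}$ consists of decoupled copies of \eqref{eq:xiDotSlow}, its equilibria are precisely the points whose $j$th coordinate $\dm_j^* = x_j^*$ is independently chosen from the three values $+1, -1, -3\alpha_j/2$ listed in Corollary \ref{cor:reducedTreeDynamics}. Simultaneously, the singular limit forces $\mb_j \to 1/2$ on the slow manifold, so at any such equilibrium each factor $(2\mb_{p_{ij}} + a_j \dm_{p_{ij}})/2$ appearing in \eqref{eq:ziProduct} collapses to $(1 + a_j \dm_j^*)/2$. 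Substituting and then applying the projection \eqref{eq:projection} for a leaf index $i$ produces exactly \eqref{eq:projectedSingularEquilibria}, with the signs $a_j$ dictated by whether the step along $p_{o^{-1}(i)}$ descends to the left or right child.

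For stability, the Jacobian of $f_{\bfx}$ at any equilibrium is block diagonal by the stacked structure of the dynamics, so its spectrum is the union of the spectra of the individual scalar reduced node dynamics. A stacked equilibrium is therefore stable if and only if each component $\dm_j^*$ that appears in \eqref{eq:projectedSingularEquilibria} along the path is itself a stable equilibrium of the corresponding scalar node dynamics, whose stability conditions are the ones recorded in Corollary \ref{cor:reducedTreeDynamics}.

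The main obstacle is the passage from stability in the $\bfx$ coordinates to stability of the projected equilibria of $\bfm_o$ when the equilibrium lies on the boundary of $\Delta^{n_o}$. Whenever a factor $(1 + a_j \dm_j^*)/2$ vanishes along the path, the change of coordinates $\bfm \mapsto \bfz$ degenerates and pulling stability back through this map is not automatic. I would handle this by observing that pinning an ancestor $\dm_j^*$ to $\pm 1$ effectively freezes one branch of the subtree below node $j$, so forward invariance of the simplex together with the tree-structured stacking of the dynamics lets the surviving factors be analyzed inductively using the node-level stability statement of Corollary \ref{cor:reducedTreeDynamics}.
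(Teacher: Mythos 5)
Your proposal is correct and follows essentially the same route as the paper: substitute $\mb_j \to 1/2$ and the node equilibria of Corollary \ref{cor:reducedTreeDynamics} into the path product \eqref{eq:ziProduct}, then read off stability from the decoupled (block-diagonal) structure of the stacked reduced dynamics. Your final worry about degeneracy of the $\bfm \mapsto \bfz$ map on the simplex boundary is not needed, since stability is established in the $\bfx$ (equivalently $\bfm$) coordinates and the projection $h$ is a smooth map of those states, so the paper simply omits this point.
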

\begin{proof}
Recall that the value of $m_{oi} = z_{o^{-1}(i)}$ associated with an option $i$ can be expressed using the recursive definition \eqref{eq:ziProduct} in terms of $\mb_j$ and $\dm_j$ associated with nodes on the shortest path from the root node to the leaf node that represents the option $i$. In the singular limit $\epsilon \to 0$, we have $\mb_j \to 1/2 \forall j$ and $\dot \dm_j$ following the dynamics \eqref{eq:xDotSlowTree} with equilibria given in Corollary \ref{cor:reducedTreeDynamics}. Substituting the values of $\mb_j$ and $\dm_j$ yields the expression \eqref{eq:projectedSingularEquilibria}.

The stability property follows by contradiction. If any $\dm_j^*$ in the product \eqref{eq:projectedSingularEquilibria} corresponds to an unstable equilibrium of the singularly-perturbed dynamics \eqref{eq:xDotSlowTree}, then the overall equilibrium will be unstable.
\end{proof}

The implication of this result is clear from noting that the intermediate equilibrium $\dm_j^* = -2\alpha_{p_{o^{-1}(i)j}}/3$ is always unstable when it exists. Furthermore, the negative signs from $a_j$ and $\dm_j^*$ cancel out when the path $p_i$ passes from a parent to a right child. Thus, when the option value $v_i$ is sufficiently high so that $\alpha_j > -2/3$ whenever $p_i$ passes from a parent to a left child and that $\alpha_j < 2/3$ whenever $p_i$ passes from a parent to a right child. In this case, each element in the product \eqref{eq:projectedSingularEquilibria} is equal to one. Thus, when $v_i$ is sufficiently large relative to the other option values, the unique stable equilibrium of the projected dynamics \eqref{eq:projectedDynamics} is the state $\bfm_o = e_i$, where $e_i$ is the indicator vector with entry $i$ equal to 1 and all other entries equal to zero. Therefore, in the singular limit and when one option value is sufficiently large relative to the others, the dynamics \eqref{eq:projectedDynamics} carries out an $\arg \max$ operation on the value vector $\bfv$. When several option values are relatively large, the dynamics \eqref{eq:projectedDynamics} effectively performs a sort of dynamical $\arg \max$ operation whose output depends on initial conditions. See Section \ref{sec:numerics-singularPerturbation} for a numerical example.

\section{Numerical examples}
In this section we show the results of numerical simulations of the dynamics. All the computations have been carried out with code that is publicly available from the author's website \cite{PBR:20a}. The code is completely general in the sense that it implements the dynamics \eqref{eq:mDotTree} for a generic binary tree $T$. For clarity of presentation, all the simulations in this section are based on a binary tree parsing four options, as shown in Figure \ref{fig:4OptionsTree}. In all simulations, the parameter $\sigma$ in \eqref{eq:miDynamics} is set equal to 4 wherever it appears (once in \eqref{eq:mDotTree} for each internal node).

\begin{figure}[ht]
\centering
\begin{tikzpicture}[level/.style={sibling distance=40mm/#1}]
\node [circle, draw] (root){$0$}
	child {node [circle, draw] (a) {1}
		child {node [circle, draw] (b) {2}}
		child {node [circle, draw] (c) {3}}
	}
	child {node [circle, draw] (g) {4}
		child {node [circle, draw] (d) {5}}
		child {node [circle, draw] (e) {6}}
	};
\end{tikzpicture}\\
Ordered node list: $(0,1,2,3,4,5,6)$ \\
Ordered option list: (2,3,5,6) \\ 
\caption{Tree $T$ used in the simulations. $T$ is a parsing of four options, corresponding to the nodes $(2,3,5,6)$. The nodes are labeled with numbers according to the order in which they will be visited during a depth-first traversal.}
\label{fig:4OptionsTree}
\end{figure}

\subsection{Bifurcation characteristics} \label{sec:numerics-bifurcation}
Here we present the results of several simulations illustrating the bifurcation characteristics of the system as studied in Section \ref{sec:bifurcation}. Figures \ref{fig:deadlock} and \ref{fig:deadlockBroken} study the case of equal option values (i.e., $v_i = v$ for each option $i \in \oneTo{4}$) and show how the system \eqref{eq:mDotTree} bifurcates from having a single stable equilibrium to having five equilibria as the option value $v$ is increased past the critical value $v^* \approx 1.9058$ defined by \eqref{eq:sigmaStar}.

For the simulation shown in Figure \ref{fig:deadlock}, $v = 1.25 < 1.9058 \approx v^*$, so the system is in the pre-bifurcation regime. As predicted by Theorem \ref{thm:treeJacobian}, the dynamics \eqref{eq:mDotTree} have an equilibrium $\bfm = \mb \mathbf{1}$, where $\mb$ is defined by \eqref{eq:mBarDeadlock}. This equilibrium value of $\bfm$ gets projected to an equilibrium value $\bfm_o$ of \eqref{eq:projectedDynamics} whose $i^{th}$ component, corresponding to the $i^{th}$ option, is equal to $\mb^{d}$, with $d = |p_{o^{-1}(i)}|$ being the distance from the root of the tree $T$ to the $i^{th}$ option.

The simulation shown in Figure \ref{fig:deadlockBroken} is identical to that shown in Figure \ref{fig:deadlock}, except that now we set $v_i = v = 5$ so that the system is in the post-bifurcation regime. The four panels show trajectories resulting from four different initial conditions along with the (now unstable) deadlock equilibrium located at $\bfm_o = \mb \mathbf{1}$. As suggested by Corollary 11, in the post-bifurcation regime there is an additional set of four symmetric stable equilibria corresponding to a clear preference for each of the four equally-valued options. The equilibrium to which a trajectory is attracted depends on initial conditions. In panel (a), initial conditions were $\bfm = (\bfm_0^T, \bfm_1^T, \bfm_4^T)^T = (0.2, 0.1, 0.3, 0.2, 0.4, 0.2)^T$, corresponding to a weak initial preference for option 1. This initial preference determines the attracting equilibrium. The other three panels (b)--(d) use initial conditions that are permutations of those from panel (a). These permutations correspond to tree isomorphisms that exchange option 1 with options 2--4, respectively. As expected, the attracting equilibrium changes from option 1 to option 2--4 accordingly.

\begin{figure}[ht]
\centering
\includegraphics[width=0.7\textwidth]{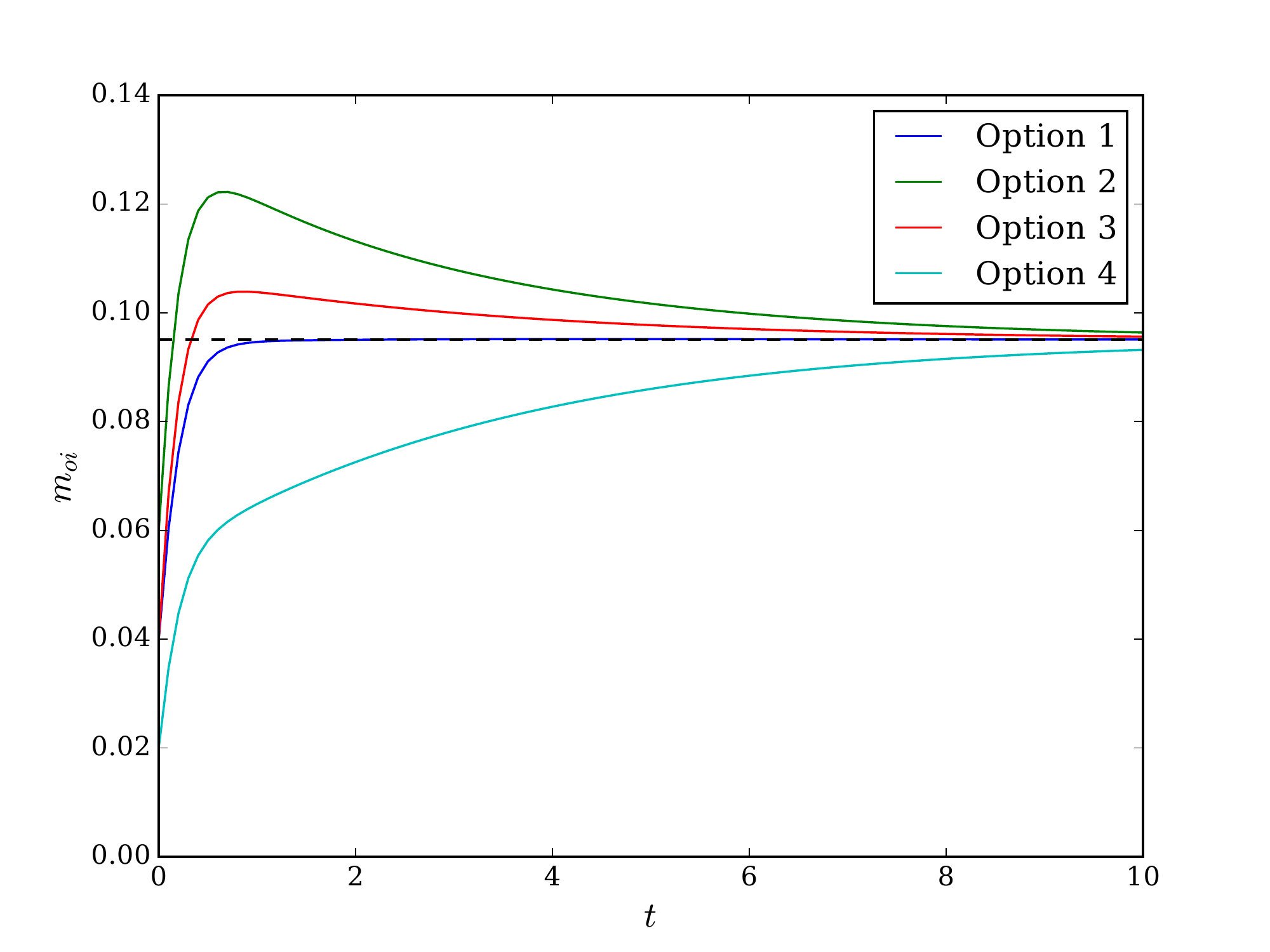}
\caption{Setting $v_i = v = 1.25$ for each option $i$ puts the system \eqref{eq:mDotTree} in the pre-bifurcation regime. This results in a stable deadlock equilibrium where no option wins. The traces show the trajectories of the components of the projected state $\bfm_o$, each of which converges to the deadlock value represented by the dashed line.}
\label{fig:deadlock}
\end{figure}

\begin{figure}[ht]
\centering
\begin{tabular}{cc}
\includegraphics[width=0.47\textwidth]{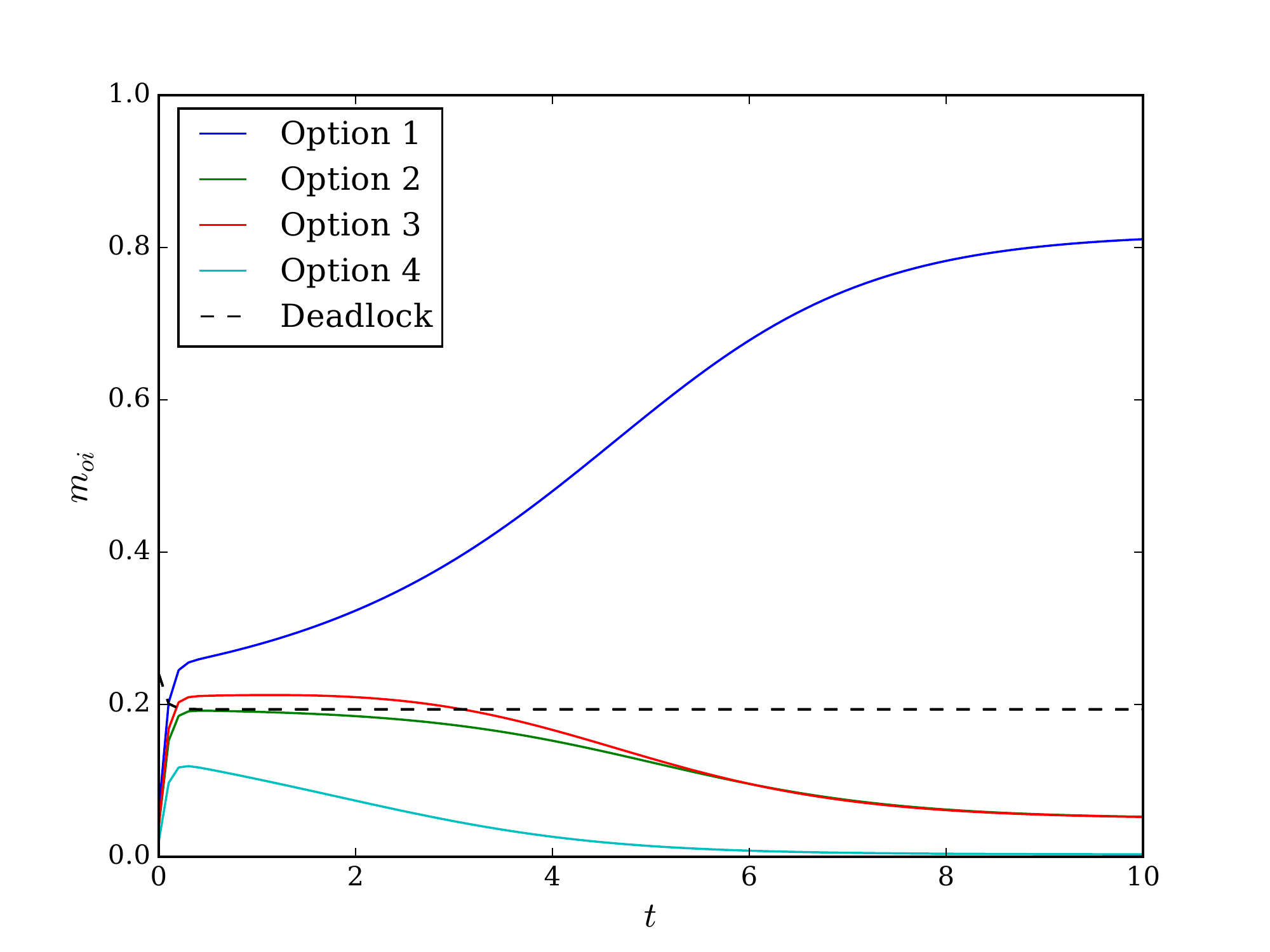} &
\includegraphics[width=0.47\textwidth]{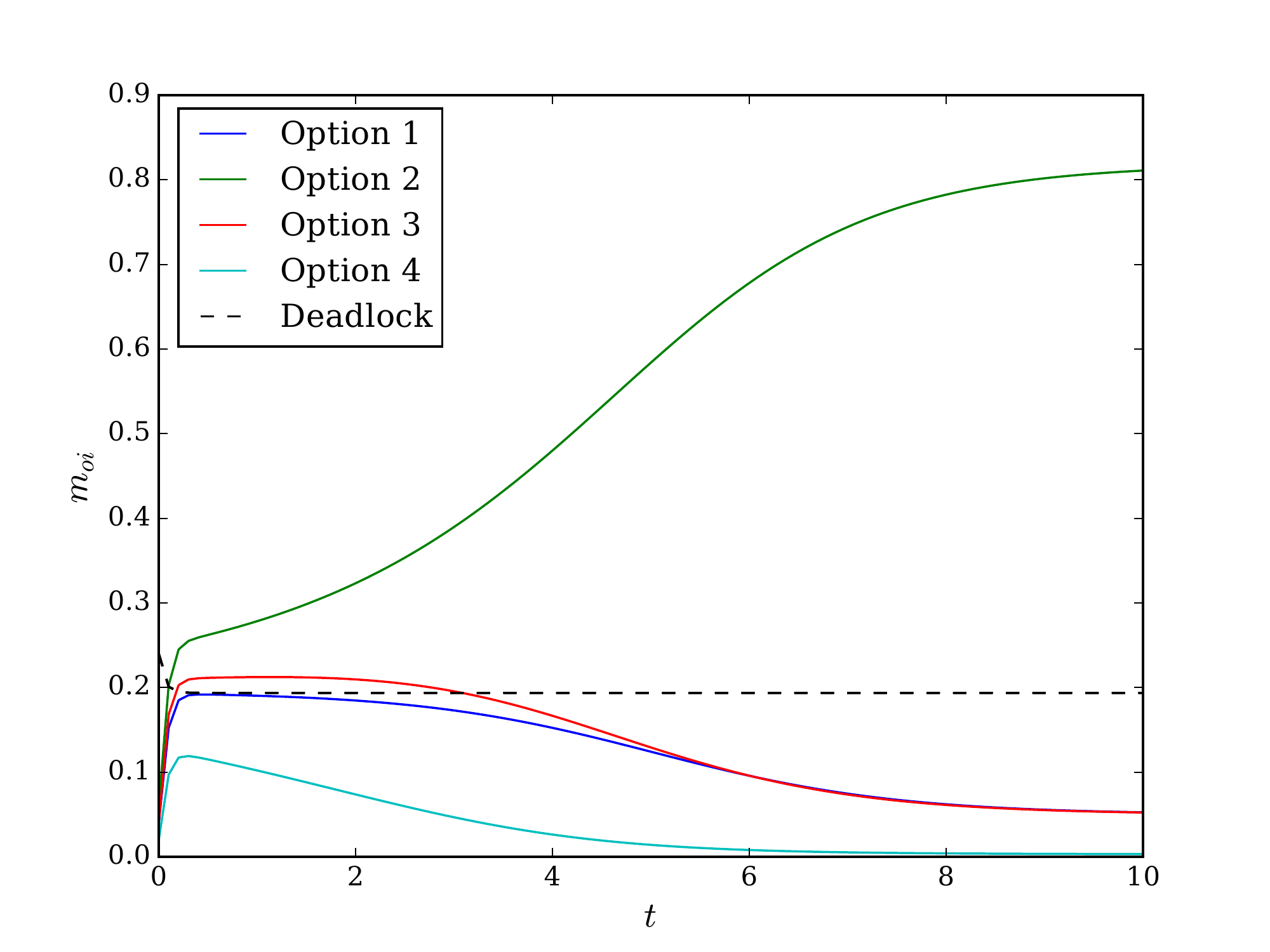} \\
\textbf{(a)} & \textbf{(b)} \\
\includegraphics[width=0.47\textwidth]{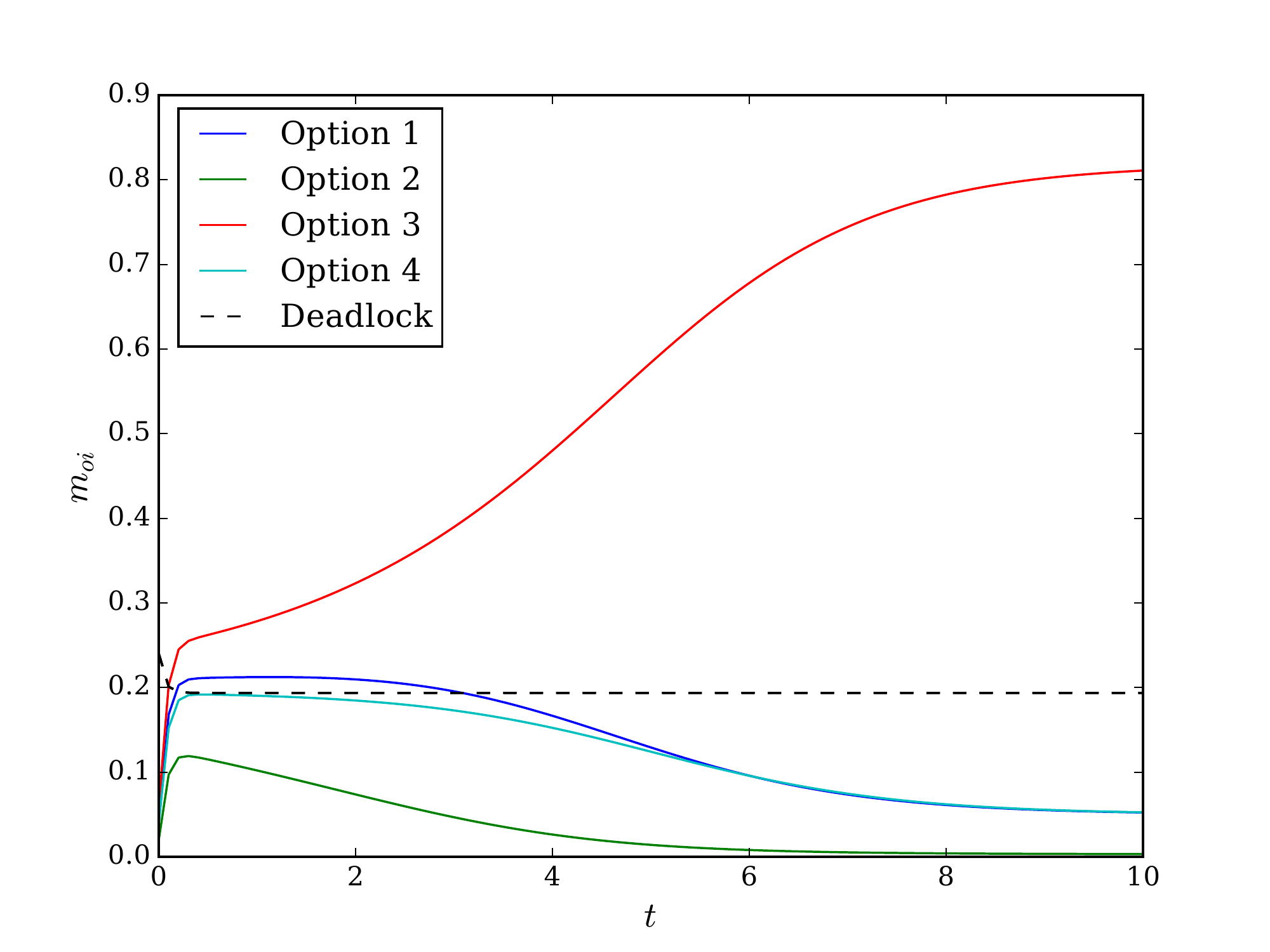} &
\includegraphics[width=0.47\textwidth]{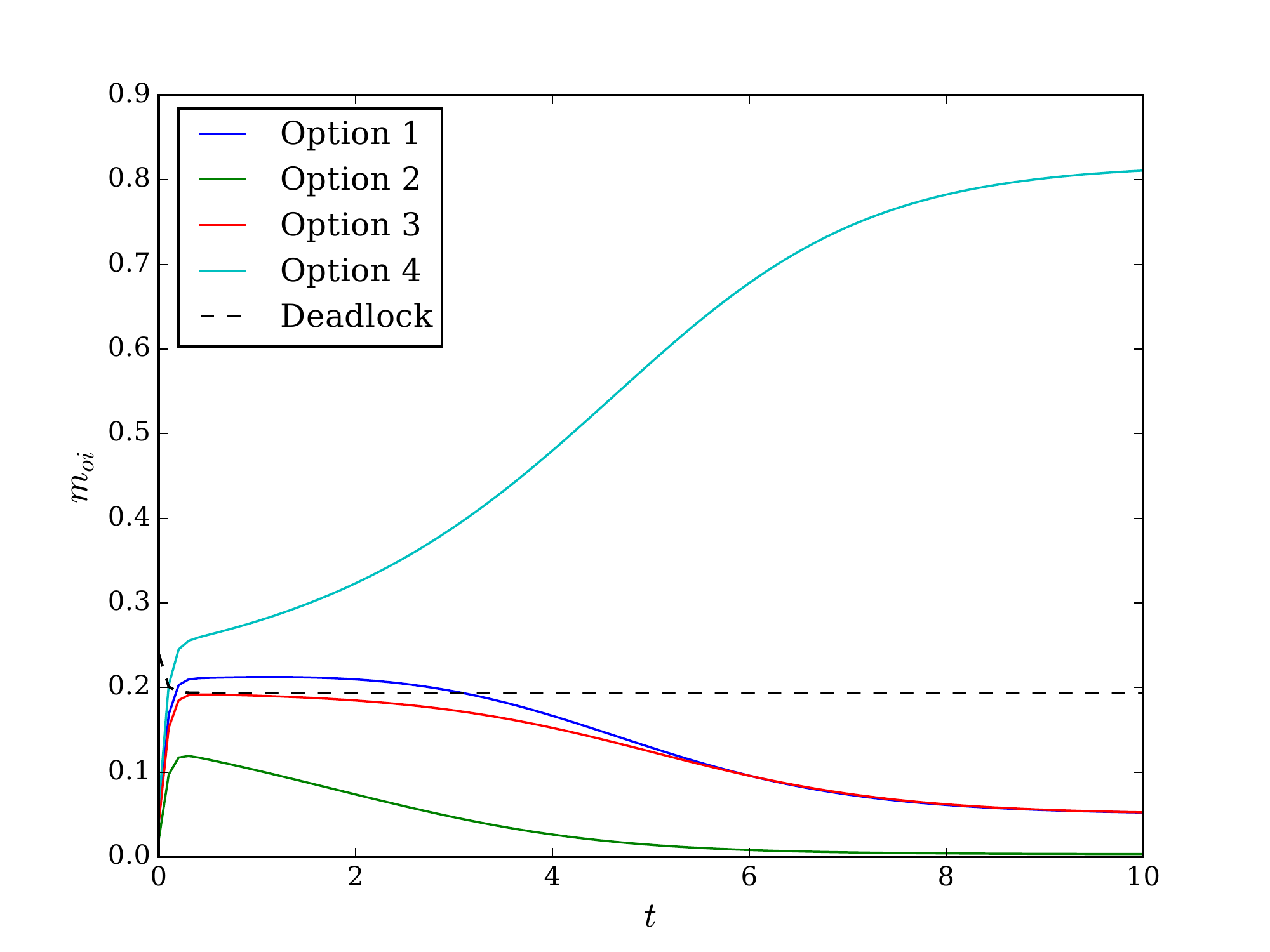} \\
\textbf{(c)} & \textbf{(d)}
\end{tabular}
\caption{Setting $v_i = v = 5$ for each option $i$ puts the system \eqref{eq:mDotTree} in the post-bifurcation regime. The traces show the trajectories of the components of the projected state $\bfm_o$. As suggested by Corollary \ref{cor:projectedSingularEquilibria}, there is a set of four symmetric stable equilibria corresponding to a clear preference for each of the four equally-valued options, along with an unstable deadlock equilibrium. The simulations producing the four plots are identical except for a permutation of the initial conditions that cause the system to converge to different stable states. Setting a symmetric initial condition $\bfz_0 = z \mathbf{1}$ where all initial state values are equal results in deadlock.}
\label{fig:deadlockBroken}
\end{figure}

\subsection{Singularly-perturbed system} \label{sec:numerics-singularPerturbation}
In Figure \ref{fig:singularlyPerturbed} we present the results of a simulation illustrating the results of Section \ref{sec:singularPerturbation}, particularly Corollary \ref{cor:projectedSingularEquilibria}. The values of the four options are set equal to $(100, 100, 300, 100)^T = 100(1,1,3,1)^T$, which puts the system close to the singularly-perturbed regime with $\epsilon = 1/100 \ll 1$. These option values are such that the unfolding parameters of the internal nodes are $\alpha_0 = -2/3, \alpha_1 = 0,$ and $\alpha_4 = 1$. In this case, Corollary \ref{cor:projectedSingularEquilibria} predicts that there should be a unique stable equilibrium at $\bfm_o = (0,0,1,0)^T$ corresponding to an absolute preference for the high-value option 3.

The results shown in Figure \ref{fig:singularlyPerturbed} confirm this prediction of a unique stable equilibrium, as the trajectories of the projected dynamics \eqref{eq:projectedDynamics} all converge to $\bfm_0 = (0,0,1,0)^T$ for four different simulations with initial conditions corresponding to the simulations shown in Figure \ref{fig:deadlockBroken}(a)--(d).

\begin{figure}[ht]
\centering
\includegraphics[width=0.7\textwidth]{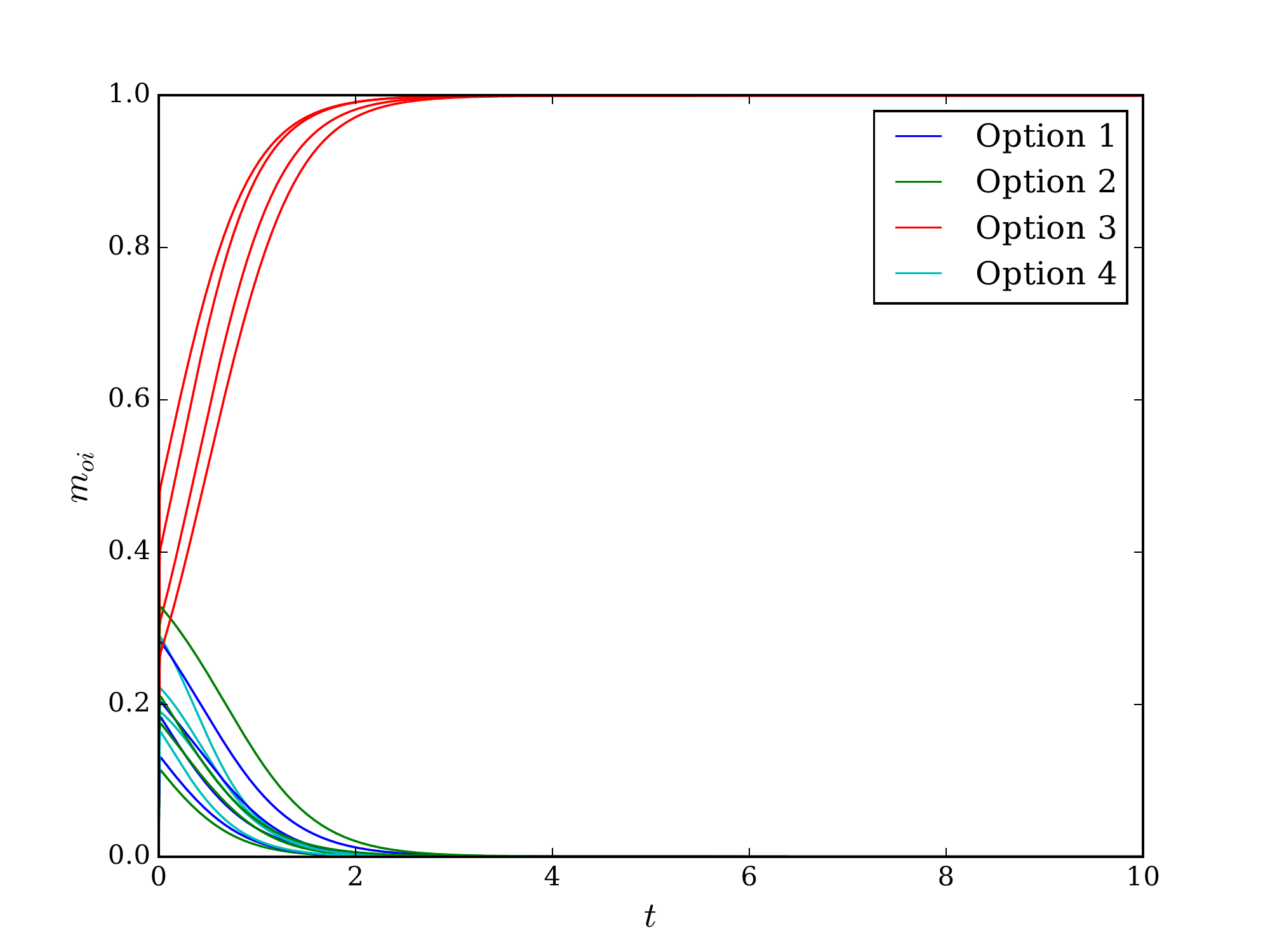}
\caption{Setting the option values $\bfv = (100, 100, 300, 100)^T = 100(1,1,3,1)^T$ puts the system \eqref{eq:mDotTree} close to the singularly-perturbed regime. Note that these option values are such that $\alpha_0 = -2/3$ and $\alpha_4 = 1$, so the only stable equilibrium should be the one corresponding to an absolute preference for the high-value option 3. As predicted by Corollary \ref{cor:projectedSingularEquilibria}, there is a single stable equilibrium corresponding to a preference for option 3. The traces show the trajectories of the components of the projected state $\bfm_o$ for the four different initial conditions considered in Figure \ref{fig:deadlockBroken}(a)--(d). In all cases, $\bfm_o$ converges to the state $(0,0,1,0)^T$.}
\label{fig:singularlyPerturbed}
\end{figure}

\section{Conclusion}
In this paper, we have developed a dynamical systems model of value-based decision making. The structure of the decision, and of the dynamical system itself, is encoded in a binary tree structure. The binary tree structure allows us to decompose a decision among $N$ options into a set of $N-1$ binary decisions arranged in a hierarchical structure. We then represent this decomposed decision as a dynamical system \eqref{eq:mDotTree} whose vector field is defined recursively by parsing down the binary tree. At each internal node of the tree, the system makes decisions based on the values associated with the node's two children, putting higher weight on the higher-value child. The $N$ leaf nodes of the tree represent the $N$ options.

The vector field \eqref{eq:mDotTree} has symmetries that correspond to isomorphisms of the underlying tree and associated option values. When the $N$ options all have the same value, all isomorphisms of the tree leave the vector field equivariant; when only some options have the same value, a smaller set of isomorphisms leave the vector field equivariant. The equilibria of the vector field have significant structure, organized around an $N-1$-parameter unfolding of a pitchfork singularity as shown in Theorem \ref{thm:treeJacobian}. The unfolding parameters of the pitchfork consist of the relative difference in values between the children of each internal node of the underlying tree. As shown in Corollary \ref{cor:projectedSingularEquilibria}, the system equilibria correspond to point attractors at states that correspond to a preference for the high-value option. In a singular limit, this preference becomes absolute in the sense that no weight is accorded to any other option.

Further work remains to be done to understand the structure of the symmetry group of the vector fields \eqref{eq:mDotTree} and \eqref{eq:projectedDynamics}, particularly in the case that only a subset of the options have identical values. In this case, the symmetry group will be a subgroup of the original group $\Gamma_T$, and such subgroups likely have interesting structure. Similarly, further work remains to be done to understand the structure of equilibria of the vector fields in the post-bifurcation regime. The main tool here is the equivariant branching lemma \cite[Theorem 3.3]{MG-IS-DGS:88}, which again leverages the subgroup structure of the symmetry group $\Gamma_T$.

There are a number of interesting questions raised by the binary tree structure of our model. For example, consider a generic case of deciding among $N>2$ options. The binary tree structure appears to be a strong constraint on the structure of the decision-making process. A more general value-based decision-making model, such as the one whose analysis was begun in \cite{AF-MG-NEL:19}, could have similar unfolding characteristics with fewer structural constraints. An open question is to understand the effect of the constraints imposed by the binary tree structure. Are there decisions that can be made by a model encoded as a flat graph (i.e., without a hierarchy structure) that cannot be made by our binary-tree-based model?

As discussed in the introduction, we anticipate the model developed in this paper to be valuable for a variety of problems requiring models of value-based decision-making behavior. We are actively pursuing applications in the area of control systems and robotics where options correspond to control vector fields and the present model affords a method to compose multiple such vector fields. In particular, we are developing methods to derive dynamics of the option values $v_i$ such that the overall system achieves a complex behavior specified, e.g., in terms of temporal logic. This work has the potential to unite dynamical systems with so-called formal methods tools \cite{HKG-ML-VR:18} for control.

\section*{Acknowledgement}
We thank Daniel Koditschek for discussions that led to the concept of a parsing. This work was supported in part by Air Force Research Laboratory grant FA8650-15-D-1845 subcontract 669737-6 and grant FA8650-19-C-1712 subcontract 670956-1.


\begin{thebibliography}{10}
\providecommand{\url}[1]{#1}
\csname url@samestyle\endcsname
\providecommand{\newblock}{\relax}
\providecommand{\bibinfo}[2]{#2}
\providecommand{\BIBentrySTDinterwordspacing}{\spaceskip=0pt\relax}
\providecommand{\BIBentryALTinterwordstretchfactor}{4}
\providecommand{\BIBentryALTinterwordspacing}{\spaceskip=\fontdimen2\font plus
\BIBentryALTinterwordstretchfactor\fontdimen3\font minus
  \fontdimen4\font\relax}
\providecommand{\BIBforeignlanguage}[2]{{%
\expandafter\ifx\csname l@#1\endcsname\relax
\typeout{** WARNING: IEEEtran.bst: No hyphenation pattern has been}%
\typeout{** loaded for the language `#1'. Using the pattern for}%
\typeout{** the default language instead.}%
\else
\language=\csname l@#1\endcsname
\fi
#2}}
\providecommand{\BIBdecl}{\relax}
\BIBdecl

\bibitem{TDS-etal:12}
T.~D. Seeley, P.~K. Visscher, T.~Schlegel, P.~M. Hogan, N.~R. Franks, and J.~A.
  Marshall, ``Stop signals provide cross inhibition in collective
  decision-making by honeybee swarms,'' \emph{Science}, vol. 335, no. 6064, pp.
  108--111, 2012.

\bibitem{JARM-etal:09}
J.~A. Marshall, R.~Bogacz, A.~Dornhaus, R.~Planqu{\'e}, T.~Kovacs, and N.~R.
  Franks, ``On optimal decision-making in brains and social insect colonies,''
  \emph{Journal of the Royal Society Interface}, vol.~6, no.~40, pp.
  1065--1074, 2009.

\bibitem{DP-etal:13}
D.~Pais, P.~M. Hogan, T.~Schlegel, N.~R. Franks, N.~E. Leonard, and J.~A.
  Marshall, ``A mechanism for value-sensitive decision-making,'' \emph{PloS
  one}, vol.~8, no.~9, p. e73216, 2013.

\bibitem{AR-etal:17}
A.~Reina, J.~A.~R. Marshall, V.~Trianni, and T.~Bose, ``Model of the
  best-of-$n$ nest-site selection process in honeybees,'' \emph{Physical Review
  E}, vol.~95, no.~5, 2017.

\bibitem{RG-etal:18}
R.~Gray, A.~Franci, V.~Srivastava, and N.~E. Leonard, ``Multi-agent
  decision-making dynamics inspired by honeybees,'' \emph{IEEE Transactions on
  Control of Network Systems}, vol.~5, no.~2, pp. 793--806, 2018.

\bibitem{AF-MG-NEL:19}
A.~Franci, M.~Golubitsky, and N.~E. Leonard, ``The dynamics of multi-agent
  multi-option decision making,'' 2019.

\bibitem{JJG:79}
J.~J. Gibson, \emph{The theory of affordances. The ecological approach to
  visual perception}.\hskip 1em plus 0.5em minus 0.4em\relax Houghton Mifflin
  Boston, MA, 1979.

\bibitem{KF-etal:13}
K.~Friston, P.~Schwartenbeck, T.~FitzGerald, M.~Moutoussis, T.~Behrens, and
  R.~J. Dolan, ``The anatomy of choice: active inference and agency,''
  \emph{Frontiers in human neuroscience}, vol.~7, p. 598, 2013.

\bibitem{NFL-GP:15}
N.~F. Lepora and G.~Pezzulo, ``Embodied choice: how action influences
  perceptual decision making,'' \emph{PLoS computational biology}, vol.~11,
  no.~4, p. e1004110, 2015.

\bibitem{PC-JFK:10}
P.~Cisek and J.~F. Kalaska, ``Neural mechanisms for interacting with a world
  full of action choices,'' \emph{Annual review of neuroscience}, vol.~33, pp.
  269--298, 2010.

\bibitem{PC:07}
P.~Cisek, ``Cortical mechanisms of action selection: the affordance competition
  hypothesis,'' \emph{Philosophical Transactions of the Royal Society B:
  Biological Sciences}, vol. 362, no. 1485, pp. 1585--1599, 2007.

\bibitem{APD-LPK-WHW:98}
A.~P. Duchon, L.~P. Kaelbling, and W.~H. Warren, ``Ecological robotics,''
  \emph{Adaptive Behavior}, vol.~6, no. 3-4, pp. 473--507, 1998.

\bibitem{PZ-etal:17}
P.~Zech, S.~Haller, S.~R. Lakani, B.~Ridge, E.~Ugur, and J.~Piater,
  ``Computational models of affordance in robotics: a taxonomy and systematic
  classification,'' \emph{Adaptive Behavior}, vol.~25, no.~5, pp. 235--271,
  2017.

\bibitem{PBR-DEK:18}
P.~B. Reverdy and D.~E. Koditschek, ``A dynamical system for prioritizing and
  coordinating motivations,'' \emph{SIAM Journal on Applied Dynamical Systems},
  vol.~17, no.~2, pp. 1683--1715, 2018.

\bibitem{PBR-VV-DEK:20}
P.~Reverdy, V.~Vasilopoulos, and D.~E. Koditschek, ``Motivation dynamics for
  autonomous composition of navigation tasks,'' \emph{Submitted}, 2020.

\bibitem{MG-DGS:85}
M.~Golubitsky and D.~Schaeffer, \emph{Singularities and Groups in Bifurcation
  Theory}, ser. Applied Mathematical Sciences.\hskip 1em plus 0.5em minus
  0.4em\relax Springer, 1985, vol.~51.

\bibitem{MG-IS:03}
M.~Golubitsky and I.~Stewart, \emph{The symmetry perspective: from equilibrium
  to chaos in phase space and physical space}.\hskip 1em plus 0.5em minus
  0.4em\relax Springer Science \& Business Media, 2003, vol. 200.

\bibitem{PBR:19d}
P.~B. Reverdy, ``Two paths to finding the pitchfork bifurcation in motivation
  dynamics,'' in \emph{Proc. IEEE Conf. Decision and Control}, 2019, pp.
  8030--8035.

\bibitem{ROS-RMM:04}
R.~Olfati-Saber and R.~M. Murray, ``Consensus problems in networks of agents
  with switching topology and time-delays,'' \emph{IEEE Transactions on
  automatic control}, vol.~49, no.~9, pp. 1520--1533, 2004.

\bibitem{MG-IS-DGS:88}
M.~Golubitsky, I.~Stewart, and D.~G. Schaeffer, \emph{Singularities and groups
  in bifurcation theory: Vol. II}, ser. Applied Mathematical Sciences.\hskip
  1em plus 0.5em minus 0.4em\relax New York: Springer-Verlag, 1988, no.~69.

\bibitem{WD-MJD:89}
W.~Dicks and M.~J. Dunwoody, \emph{Groups acting on graphs}.\hskip 1em plus
  0.5em minus 0.4em\relax Cambridge University Press, 1989, vol.~17.

\bibitem{AVA-JEH-JDU:74}
A.~V. Aho, J.~E. Hopcroft, and J.~D. Ullman, \emph{The design and analysis of
  computer algorithms}.\hskip 1em plus 0.5em minus 0.4em\relax Addison-Wesley,
  1974.

\bibitem{MH:14}
\BIBentryALTinterwordspacing
M.~Hazewinkel. Binary tree. {Encyclopedia of Mathematics.} [Online]. Available:
  \url{{http://www.encyclopediaofmath.org/index.php?title=Binary\_tree\&oldid=31607}}
\BIBentrySTDinterwordspacing

\bibitem{PBR:20a}
\BIBentryALTinterwordspacing
P.~B. Reverdy. (2020, Mar.) preverdy/binary-tree-decisions: First release.
  [Online]. Available: \url{https://doi.org/10.5281/zenodo.3698325}
\BIBentrySTDinterwordspacing

\bibitem{HKG-ML-VR:18}
\BIBentryALTinterwordspacing
H.~Kress-Gazit, M.~Lahijanian, and V.~Raman, ``Synthesis for {Robots}:
  {Guarantees} and {Feedback} for {Robot} {Behavior},'' \emph{Annual Review of
  Control, Robotics, and Autonomous Systems}, vol.~1, no.~1, pp. 211--236,
  2018. [Online]. Available:
  \url{https://doi.org/10.1146/annurev-control-060117-104838}
\BIBentrySTDinterwordspacing

\end{thebibliography}
\end{document}